\def\FillInBlank{\rule{2truein}{.01truein}}
\def\Blank{\rule{1.5truein}{.01truein}}
\tikzstyle{block}=[draw opacity=0.7,line width=1.4cm]
\def\da{\draw[->]}
\newtheorem{thm}{Theorem}[section]
\newtheorem{lem}[thm]{Lemma}
\newtheorem{prop}[thm]{Proposition}
\newtheorem*{conjecture*}{Conjecture}
\newtheorem*{thm*}{Theorem}
\theoremstyle{remark}
\newtheorem*{question*}{Question}
\newtheorem*{remark*}{Remark}
\theoremstyle{definition}
\newtheorem{define}[thm]{Definition}
\newcommand{\G}{\mathcal{G}}
\newcommand{\LL}{\mathcal{L}}     % Fancy script L
\newcommand{\MM}{\mathcal{M}}  % Fancy script M
\newcommand{\OO}{\mathcal{O}}    % Fancy script O
\newcommand{\FF}{\mathbb{F}}      % Finite field
\newcommand{\ZZ}{\mathbb{Z}}     % Integers
\newcommand{\RR}{\mathbb{R}}     % Reals
\newcommand{\PP}{\mathbb{P}}      % Projective space
\newcommand{\Aff}{\mathbb{A}}      % Affine space
\newcommand{\XX}{\mathcal{X}}      % Model of a variety - script X
\newcommand{\QQ}{\mathbb{Q}}      %Rationals
\newcommand{\CC}{\mathbb{C}}      % Complex Numbers
\newcommand{\mm}{\mathfrak{m}}   % maximal ideal
\newcommand{\pp}{\mathfrak{p}}   % prime ideal
\newcommand{\qq}{\mathfrak{q}}  % another prime ideal
\newcommand{\Gm}{\mathbb{G}_m}  % blackboard bold G for the multiplicative group
\newcommand{\Ga}{\mathbb{G}_a}  % blackboard bold G for the additive group
\newcommand{\hh}{\mathfrak{h}}  % Upper half plane
\newcommand{\NN}{\mathbb{N}}  %Natural numbers 
\newcommand{\bianca}[1]{{\color{magenta} \sf $\clubsuit\clubsuit\clubsuit$ Bianca: [#1]}}
\newcommand{\michelle}[1]{{\color{orange} \sf $\clubsuit\clubsuit\clubsuit$ Michelle: [#1]}}
\newcommand{\End}{\operatorname{End}}
\newcommand{\Hom}{\operatorname{Hom}}
\newcommand{\im}{\operatorname{im}} % Image
\newcommand{\coker}{\operatorname{coker}}  % Cokernel
\newcommand{\Sym}{\operatorname{Sym}}      % Symmetric product
\newcommand{\Spec}{\operatorname{Spec}}
\newcommand{\ord}{\operatorname{ord}}
\newcommand{\Div}{\operatorname{div}}    % Divisor of a rational function
\newcommand{\Gal}{\operatorname{Gal}}  % Galois group
\newcommand{\Gauss}{\operatorname{Gauss}}  % Used for the Gauss point
\newcommand{\supp}{\operatorname{supp}}   % Support
\newcommand{\Pic}{\operatorname{Pic}}        % Picard Groups
\newcommand{\Jac}{\operatorname{Jac}}       % Jacobian Variety
\newcommand{\mult}{\operatorname{mult}}  % multiplicity
\newcommand{\pr}{\operatorname{pr}}     % projection
\newcommand{\sep}[1]{{#1}^{\operatorname{s}}}    % separable closure
\newcommand{\Spf}{\operatorname{Spf}}    % formal spectrum
\newcommand{\Frac}{\operatorname{Frac}}    % Fraction field
\newcommand{\chern}[1]{c_1\left(#1\right)}   % First Chern class
\newcommand{\codim}{\operatorname{codim}}  % codimension
\newcommand{\dist}{\operatorname{dist}}   % distance
\newcommand{\an}[1]{\operatorname{an}}  % analytic space notation
\newcommand{\Aut}{\operatorname{Aut}}   % Automorphism group
\newcommand{\Rat}{\operatorname{Rat}}    % space of rational maps
\newcommand{\PGL}{\operatorname{PGL}}
\newcommand{\PSL}{\operatorname{PSL}}
\newcommand{\alg}[1]{{\overline{#1}}}
\newcommand{\GG}{\mathbb{G}}
\newcommand{\leftexp}[2]{{\vphantom{#2}}^{#1}{#2}}   % Superscript on the left
\newcommand{\simarrow}{\stackrel{\sim}{\rightarrow}}    % Isomorphic mapping
\newcommand{\ip}[2]{\left\langle #1,#2 \right\rangle} %inner product
\newcommand{\into}{\hookrightarrow}     % Inclusion arrow
\newcommand{\dint}{\int \!\!\! \int}   % double integral
\newcommand{\tth}{^{\operatorname{th}}}
\newcommand{\Berk}{\mathbf{P}}  % Berkovich Projective Space
\newcommand{\Manoa}{M\=anoa}
\newcommand{\Hawaii}{Hawai\kern.05em`\kern.05em\relax i}
\newcommand{\id}{\mathrm{id}}
\newcommand{\oo}{\mathfrak{o}}
\DeclareMathOperator{\Per}{Per}
\DeclareMathOperator{\PrePer}{PrePer}
\DeclareMathOperator{\Twist}{Twist}
\DeclareMathOperator{\Ker}{Ker}
\DeclareMathOperator{\lcm}{lcm}
\author{Michelle Manes}
\author{Bianca Thompson}
\address{Michelle Manes: Department of Mathematics, University of Hawaii, 2565 McCarthy Mall, Honolulu, HI 96822, USA}\email{mmanes@math.hawaii.edu}
\address{Bianca Thompson: Department of Mathematics, University of Hawaii, 2565 McCarthy Mall, Honolulu, HI 96822, USA}\email{bat7@hawaii.edu}
\thanks{The work of both authors was partially supported by NSF-DMS 1102858.}
\keywords{Finite fields, polynomial dynamics, periodic points}
\subjclass[2010]{Primary 37P25; Secondary 37P05, 11T06}
\title[Periodic points in towers of finite fields]{Periodic points in towers of finite fields for polynomials associated to algebraic groups}
\date{\today}
\begin{document}
\begin{abstract}
We find the limiting proportion of periodic points in towers of finite fields for polynomial maps associated to algebraic groups, namely pure power maps $\phi(z) = z^d$  and  Chebyshev polynomials.
\end{abstract}

\maketitle

\section{Introduction}
We fix the following notation:

\begin{tabular}{l l }
$\phi(z)$  & a polynomial.\\
$\phi^n(z)$ & the $n\tth$ iterate of $\phi$ under composition; we take $\phi^0(z) = z$.\\
$\OO_\phi(\alpha)$ & the (forward) orbit of a point $\alpha$ under $\phi$; i.e. $ \{ \phi^n(z) \mid n \geq 0 \}$.\\
$\Per(\phi, K)$ & the set of periodic points for $\phi$ in the field $K$;\\
& i.e. $\{ \alpha \in K \mid \phi^n(\alpha) = \alpha \text{ for some } n >0 \}$.\\
\end{tabular}

When iterating a polynomial function $\phi$ over a finite field, the orbit of any point $\alpha \in \FF_{p^n}$
is a finite set.  That is,  all points are preperiodic, meaning the orbit eventually enters a cycle.  But  many natural questions about the structure of orbits over finite fields remain:\label{3questions}
\begin{enumerate}
\item
Fix a finite field $\FF_{p^n}$ and look over all polynomials of fixed degree $d$:  On average are there ``lots'' of periodic points with relatively small tails leading into the cycles?  Or do we expect few periodic points with long tails? (See Figures~\ref{fig:lots of perper} and~\ref{fig:lots of per}.)
\item
Fix a polynomial:  How does the proportion of periodic points in $\FF_p$ vary as $p \to \infty$?
\item
Again fix a polynomial: How does the proportion of periodic points in $\FF_{p^n}$ vary as $n \to \infty$?
\end{enumerate}

\begin{figure}
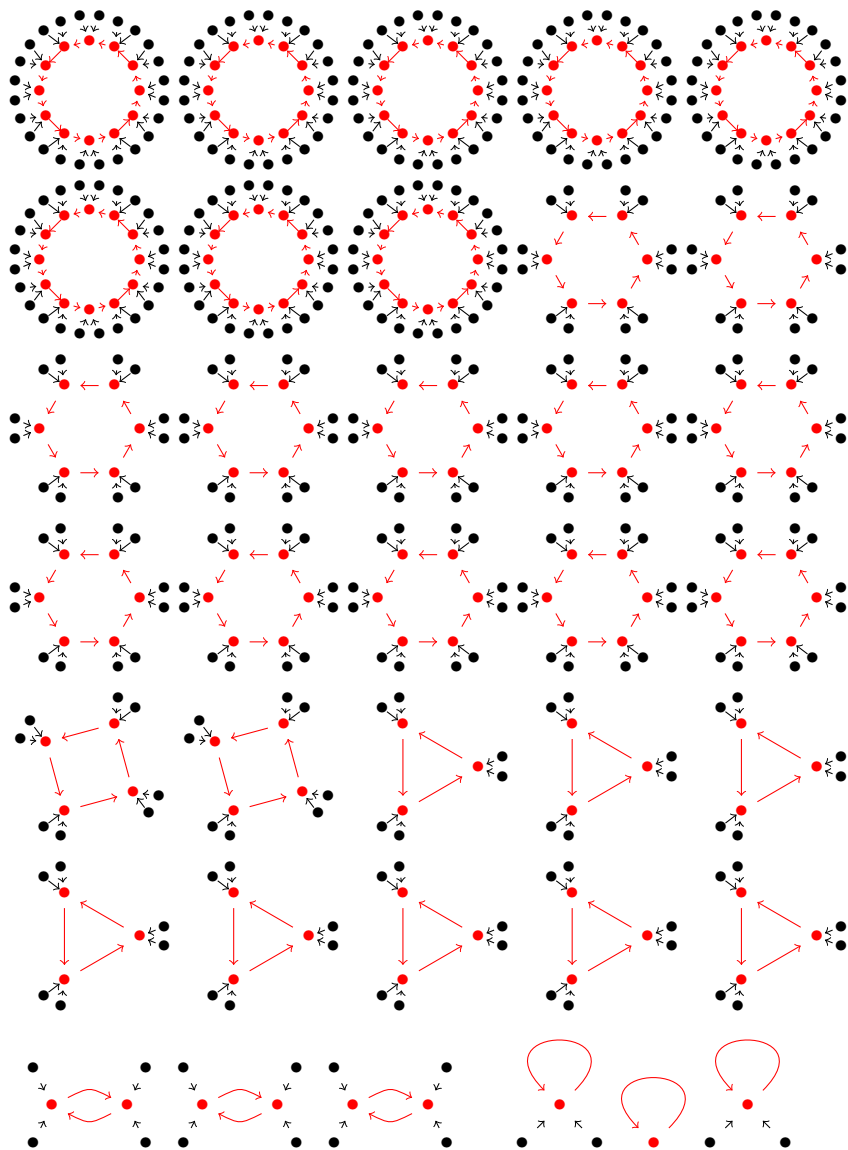

% [inline block 0: 2 envs, 61313 chars -> data_tex | \begin{tikzpicture} ...]

\caption{Lots of periodic points: $\phi(z) = z^{3}$ on $\FF_{5^4}$ has 209 periodic points and 416 strictly preperiodic points.}
\label{fig:lots of per}
\end{figure}

Recent work by Flynn and Garton~\cite{flynngarton} addresses the first question.  Using combinatorial arguments, they bound   the average number of periodic points over all polynomials of degree~$d$.  For $d$ large (that is, $d \geq \sqrt{p^n}$), their bound of $\frac 5 6 \sqrt{p^n}$ agrees with earlier heuristic arguments.

In her thesis~\cite{madhu}, Madhu tackles the second question in the case $\phi(z) = z^m + c$, using Galois-theoretic methods.  With some restrictions on $c$, she shows that for primes congruent to~$1$ modulo $m$, the proportion of points in $\FF_p$ that are
periodic points for $\phi$ goes to zero as $p \to \infty$.

In the current work, we focus on the third question in the special case that the polynomial map $\phi(z)$ 
%has prime degree $q$ and  
can be viewed as an endomorphism of an underlying algebraic group. This restriction makes the structure of the periodic points particularly simple and is therefore a natural place to begin a more complete investigation of the question.   

We will quickly see that in fact the na\"ive  limit
\[
\lim_{n\to \infty} 
\frac{\#\Per\left(\phi,\FF_{p^n}\right)}{p^n}
\]
does not exist in general, because the map $\phi$ acts as a permutation polynomial whenever $n$ is relatively prime to the multiplicative order of $p$ modulo the degree of $\phi$.  

However, we are able to find limiting proportions along  towers of finite fields $\FF_{p^n}$ with suitable divisibility conditions on $n$.
For example, we have the following two results for $q$ an odd prime.  Similar results hold in the case $q = 2$ and for maps of composite degree.

\begin{thm*}[Theorems~\ref{oddcase} and~\ref{lem: P_v chebyshev}]
Fix a prime $p$ and let  $q$ be a different odd prime.  Define $\delta = \ord_q(p)$ and  $\mu = v_q(p^\delta - 1) \geq 1$.  Let  $\phi(z) = z^q$,  and let $T_q(z)$ be the $q^{\text{th}}$ Chebyshev polynomial.  Then we have the following:
\begin{align*}
 \lim_{\substack{
n \to \infty\\
\delta \mid n\\
v_q(n)=\nu }}\frac{\# \Per\left(\phi, \FF_{p^n}\right)}{p^n}
& = \frac 1{q^{\mu+\nu}}, \text{ and}\\
 \lim_{\substack{
n \to \infty\\
\delta \mid 2n\\
v_q(n)=\nu }}\frac{\# \Per\left(T_q, \FF_{p^n}\right)}{p^n} 
& = \frac{q^{\mu+\nu}+1}{2 q^{\mu+\nu}}.
\end{align*}

\end{thm*}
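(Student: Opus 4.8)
The plan is to translate both dynamical questions into counting problems inside the cyclic multiplicative groups of the finite fields, and then to evaluate the relevant $q$-adic valuations using the Lifting-the-Exponent (LTE) lemma.

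For the power map $\phi(z) = z^q$, I would first note that $0$ is fixed and that on $\FF_{p^n}^* \cong \ZZ/(p^n-1)\ZZ$ the map is multiplication by $q$ on exponents. A nonzero $\alpha$ is periodic precisely when $\alpha^{q^m} = \alpha$ for some $m > 0$, i.e. $\ord(\alpha) \mid q^m - 1$; since $q^m - 1$ is coprime to $q$, this holds iff $\gcd(\ord(\alpha), q) = 1$. Hence the periodic points are exactly the unique subgroup of order $(p^n-1)/q^{v_q(p^n-1)}$ together with $0$, so that
\[
\#\Per(\phi, \FF_{p^n}) = 1 + \frac{p^n - 1}{q^{\,v_q(p^n - 1)}}.
\]
The remaining step is to show $v_q(p^n-1) = \mu + \nu$ under $\delta \mid n$ and $v_q(n) = \nu$. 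Writing $n = \delta k$ and applying LTE to $(p^\delta)^k - 1$ gives $v_q(p^n-1) = v_q(p^\delta - 1) + v_q(k) = \mu + v_q(k)$; since $\delta = \ord_q(p)$ divides $q-1$ we have $q \nmid \delta$, whence $v_q(k) = v_q(n) = \nu$. Dividing by $p^n$ and letting $n \to \infty$ yields the first limit $1/q^{\mu+\nu}$.

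For the Chebyshev polynomial I would exploit the semiconjugacy $\pi(w) = w + w^{-1}$, which satisfies $\pi \circ \phi = T_q \circ \pi$ because $T_q(w + w^{-1}) = w^q + w^{-q}$. The key reduction is that $x \in \FF_{p^n}$ is periodic for $T_q$ iff any $\pi$-preimage $w$ is periodic for $\phi$: if $T_q^m(x) = x$ then $\pi(w^{q^m}) = \pi(w)$, forcing $w^{q^m} = w^{\pm 1}$, and in either case $w^{q^{2m}} = w$. To count, I would impose $w + w^{-1} \in \FF_{p^n}$; applying the Frobenius $y \mapsto y^{p^n}$ shows this holds iff $w^{p^n} = w^{\pm 1}$, i.e. iff $w \in \FF_{p^n}^* \cup \mu_{p^n+1}$, where $\mu_{p^n+1}$ is the kernel of the norm map and is cyclic of order $p^n+1$. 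Counting periodic $w$ (those of order coprime to $q$) in each factor, using inclusion–exclusion on the overlap $\FF_{p^n}^* \cap \mu_{p^n+1} = \{\pm 1\}$, and accounting for the $2$-to-$1$ map $\pi$ (ramified exactly at $w = \pm 1$, which map to the periodic points $x = \pm 2$), I expect
\[
\#\Per(T_q, \FF_{p^n}) = \frac{1}{2}\left(\frac{p^n - 1}{q^{\,v_q(p^n-1)}} + \frac{p^n + 1}{q^{\,v_q(p^n+1)}}\right).
\]

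Finally I would compute the two valuations under $\delta \mid 2n$, $v_q(n) = \nu$, splitting into the cases $\delta \mid n$ and $\delta \nmid n$. When $\delta \mid n$ the earlier LTE computation gives $v_q(p^n-1) = \mu + \nu$, while $p^n \equiv 1 \not\equiv -1 \pmod q$ forces $v_q(p^n+1) = 0$. When $\delta \mid 2n$ but $\delta \nmid n$, then $\delta$ is even, $p^{\delta/2} \equiv -1 \pmod q$ with $v_q(p^{\delta/2}+1) = \mu$, and the sum-version of LTE applied to $(p^{\delta/2})^{j} + 1$ (with $j = n/(\delta/2)$ odd) gives $v_q(p^n+1) = \mu + \nu$ while $v_q(p^n-1) = 0$. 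In both cases one term of the displayed formula contributes $\tfrac12\cdot\tfrac{p^n \pm 1}{q^{\mu+\nu}}$ and the other contributes $\tfrac12(p^n \mp 1)$, so after dividing by $p^n$ and letting $n \to \infty$ the limit is $\tfrac12\bigl(1 + q^{-(\mu+\nu)}\bigr) = (q^{\mu+\nu}+1)/(2q^{\mu+\nu})$. I expect the main obstacle to be the Chebyshev bookkeeping: establishing the periodicity equivalence through the semiconjugacy, correctly identifying the locus $w + w^{-1} \in \FF_{p^n}$ with $\FF_{p^n}^* \cup \mu_{p^n+1}$, and handling the ramification of $\pi$ so that the count of $x$ is exact rather than merely asymptotic.
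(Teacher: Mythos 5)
Your proposal is correct and follows essentially the same route as the paper: you identify the periodic points of $z^q$ with the prime-to-$q$ part of $\FF_{p^n}^*$ (plus $0$), reduce the Chebyshev count via $\zeta\mapsto\zeta+\zeta^{-1}$ to the prime-to-$q$ roots of unity in $\mu_{p^n-1}\cup\mu_{p^n+1}$ (the paper's Lemmas~\ref{lem: periodic}, \ref{lem: T2periodic}, and~\ref{lem: z+1/z in F_p^n}), and compute the valuations by lifting-the-exponent exactly as in Lemma~\ref{lem: q odd}. The only cosmetic differences are that you prove periodicity via order-divisibility rather than forward invariance, and you evaluate $v_q(p^n+1)$ by a case split with LTE for sums where the paper simply notes that $q$ divides exactly one of $p^n\pm1$ and uses $v_q(p^{2n}-1)=\mu+\nu$; your exact count $(d_1+d_2)/2$ agrees with the paper's.
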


\subsection*{Outline} 
Section~\ref{sec: alg groups} offers a brief overview of the two families of polynomials we consider here: pure power maps and Chebyshev polynomials.  In Section~\ref{sec: bg} we prove some useful lemmas concerning $q$-adic valuations.   Sections~\ref{sec: power} and~\ref{sec: Cheby} give our main results for pure power maps and Chebyshev polynomials respectively.

\subsection*{Acknowledgements} 
The three questions on page~\pageref{3questions} grew out of Joe Silverman's lectures and problems at the 2010 Arizona Winter School, which the authors were both lucky to attend.
The authors are very grateful for the opportunity to attend Sage Days 42, where this work was begun in earnest.  Thanks especially to our working group for helpful conversations and computations: Alina Bucur, Anna Haensch, Adriana Salerno, Lola Thompson, and Stephanie Treneer.  Thanks also to Tom Tucker and Kalyani Madhu for help with pictures.

\section{Polynomials associated to endomorphisms of algebraic groups}\label{sec: alg groups}

We first consider the multiplicative group $\Gm$ where for a field $K$,  the $K$-valued points are $\Gm(K) = K^*$.  The endomorphism ring of $\Gm$ is $\ZZ$:
\begin{align*}
\ZZ &\to \End(\Gm)\\
d &\mapsto z^d.
\end{align*}
So these pure power maps can be viewed as endomorphisms of an underlying group.  Iteration of pure power maps is particularly easy to understand, as
\[
\phi(z) = z^d \quad \text{ means } \phi^n(z) = z^{d^n}.
\]

Similarly, we consider the additive group $\Ga$, whose underlying scheme is the affine line $\Aff^1$, which may be viewed as a quotient of $\Gm$:
\begin{align*}
\Gm/\{z = z^{-1}\} &\to \Aff^1\\
z &\mapsto z+z^{-1}.
\end{align*}
Since the automorphism $z\mapsto z^{-1}$ commutes with the power map $\phi(z) = z^d$, the polynomial $\phi$ descends to an endomorphism of $\Aff^1$, which we denote $T_d$, the $d\tth$ Chebyshev polynomial.  

%\michelle{put in commutative diagram here?}

\def\zd{{\rm z\mapsto z^d}}\def\Td{{\rm \omega \mapsto T_d(\omega)}} \def\zinverse{{\rm z \mapsto z+z^{-1}}}
\begin{equation*}
\begin{CD}
\Gm @>\zd>> \Gm \\
@VVV @VVV\\
\Gm /z\sim z^{-1} @>\zd>> \Gm /z\sim z^{-1}\\
@VV\zinverse V @VV\zinverse V\\
\Aff^1 @>\Td>> \Aff^1 \\
\end{CD}
\end{equation*}

Taking as a definition the fact that $T_d(w) \in \ZZ[w]$ satisfies
\begin{equation}
T_d(z+z^{-1}) = z^d + z^{-d},
\label{eqn:chebyshev def}
\end{equation}
one may prove existence and uniqueness of the Chebyshev polynomials along with a simple recursion
\begin{equation}
T_d(w) = 
\begin{cases}
2 & d = 0\\
w & d = 1\\
w T_{d-1}(w) - T_{d-2}(w) & d \geq 2.
\end{cases}\label{eqn: Chebyshev}
\end{equation}
A pleasant rule for composition of Chebyshev polynomials arises directly from the definition in~\eqref{eqn:chebyshev def}:
\[
T_d \circ T_e(w) = T_{de}(w) = T_e \circ T_d(w),
\] 
which in turn gives a simple form of iteration
\begin{equation}\label{eqn: cheby iter}
T_d^n(w) = T_{d^n}(w).
\end{equation}

We refer the interested reader to~\cite[Chapter 6]{ads} for more on the dynamics of pure power maps, Chebyshev polynomials, and other rational maps arising from algebraic groups, including proofs of some of the statements above.

\section{Preliminaries}\label{sec: bg}
This section contains a few  facts about valuations and periodic points over finite fields which will be useful in the sequel.  Throughout this section, $p$ and $q$ represent distinct primes, $n$ is a positive integer, and we use the  following additional notation:

\begin{tabular}{l l}
$v_q(n)$ & $q$-adic valuation; i.e. if $n = q^\nu d$ with $q \nmid d$, then $v_q(n) = \nu$.\\
$\delta$ & $\ord_q(p)$; i.e. the smallest positive integer such that $q \mid \left(p^\delta - 1\right)$.\\
\end{tabular}

Since our goal is ultimately to classify periodic points in finite fields, we need to be able to recognize which points are periodic as opposed to strictly preperiodic.  Our first result says that any finite set that is \emph{forward invariant under $\phi$} contains only periodic points.

\begin{lem}\label{lem: finitefieldperpts}
Let $\phi(z) \in K[z]$ be a polynomial and let $S \subseteq K$ be finite.  If 
\[
\phi(S) = S,
\]
 then $S \subseteq \Per(\phi,K).$
\end{lem}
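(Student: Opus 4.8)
The plan is to leverage the finiteness of $S$ to upgrade the hypothesis $\phi(S) = S$ into the statement that $\phi$ permutes $S$. First I would read the single equality $\phi(S) = S$ as supplying two containments at once. The inclusion $\phi(S) \subseteq S$ says that the restriction $\phi|_S$ is a well-defined self-map $S \to S$, so in particular the forward orbit of every point of $S$ stays inside $S$. The reverse inclusion $S \subseteq \phi(S)$ says that this self-map is \emph{surjective}. Since $S$ is finite, any surjective self-map is automatically injective (pigeonhole), so $\phi|_S$ is a bijection, i.e.\ an element of the finite symmetric group $\Sym(S)$.

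Next I would invoke the elementary fact that a permutation of a finite set has finite multiplicative order. Concretely, the powers $\phi|_S, (\phi|_S)^2, (\phi|_S)^3, \dots$ all lie in the finite group $\Sym(S)$, so two of them coincide, and group cancellation produces an integer $N > 0$ with $(\phi|_S)^N = \id_S$; one may take $N$ to be the $\lcm$ of the cycle lengths of $\phi|_S$, or simply $|S|!$. Unwinding the definition, this means $\phi^N(\alpha) = \alpha$ for every $\alpha \in S$, with $N > 0$ independent of $\alpha$. That is precisely the condition $\alpha \in \Per(\phi, K)$, and since $\alpha \in S$ was arbitrary, we conclude $S \subseteq \Per(\phi, K)$.

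I do not anticipate any real obstacle here: the entire content is that a surjective self-map of a finite set is a bijection of finite order. The only point deserving a moment's care is the dual reading of $\phi(S) = S$ noted above — it is the reverse containment $S \subseteq \phi(S)$, forcing surjectivity, that rules out strictly preperiodic points; mere forward invariance $\phi(S) \subseteq S$ would only guarantee that every point is preperiodic. A slightly more hands-on route, avoiding $\Sym(S)$ altogether, would argue through the functional graph of $\phi$ on $S$: each vertex has out-degree $1$, the in-degrees sum to $|S|$, and surjectivity makes every in-degree at least $1$, hence exactly $1$; a finite directed graph in which every vertex has in- and out-degree $1$ is a disjoint union of cycles and therefore has no tails, so every point of $S$ is periodic. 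I would present the permutation argument as the cleaner of the two and relegate this graph-theoretic rephrasing to at most a remark.
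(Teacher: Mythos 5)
Your argument is correct, but it is packaged differently from the paper's. You upgrade $\phi(S)=S$ to the statement that $\phi|_S$ is a permutation of the finite set $S$ (surjective self-map of a finite set is bijective) and then quote the fact that an element of the finite group $\Sym(S)$ has finite order, yielding a single exponent $N>0$ with $\phi^N|_S = \id_S$. The paper never establishes injectivity: it fixes one $\alpha \in S$, uses $\phi^n(S)=S$ to produce for each $n$ a preimage $\beta_n \in S$ with $\phi^n(\beta_n)=\alpha$, applies pigeonhole to find $n>m$ with $\beta_n=\beta_m$, and concludes $\phi^{n-m}(\alpha)=\alpha$. Both are pigeonhole-on-a-finite-set arguments at heart, and both genuinely use the containment $S\subseteq\phi(S)$ that you rightly flag as the essential hypothesis (forward invariance alone would not do). Your version buys a slightly stronger conclusion --- a period $N$ uniform over all of $S$, e.g.\ $N=|S|!$ --- and is arguably the cleaner standard argument; the paper's version is marginally more economical in that it asks only for surjectivity of $\phi$ onto $S$ from $S$ and treats each point separately. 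Either is a complete and acceptable proof of the lemma as stated.
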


\begin{proof}
Fix $\alpha \in S$.  For every $n > 0$ we have $\phi^n(S) = S$.  Hence for every $n$, we can find $\beta_n \in S$ such that $\phi^n(\beta_n) = \alpha$.

Since  $S$ is  finite, for some $n > m >0$, we must have $\beta_n = \beta_m$.  But this means we have $\beta \in S$ such that
\[
\phi^m(\beta) = \alpha \text{ and }
\phi^n(\beta) = \alpha, \text{ so} \quad
\phi^{n-m}(\alpha)= \alpha
\]
and $\alpha$ is periodic.
\end{proof}

The next three lemmas give us the tools to calculate the $q$-adic valuation of $p^{nd}-1$ based on the valuations of $p^d-1$ and $n$.  These will be used to create the towers of finite fields for which we can calculate limiting proportions of periodic points.  The results are different enough for $q=2$ compared to odd primes that we break up the cases along those lines.

\begin{lem}\label{lem: q notdiv n}
Let $p$ and $q$ be distinct primes.  Suppose $v_q( p^d - 1 ) = \mu \geq 1$ and $v_q(n) = 0$.  Then $v_q( p^{nd} - 1 ) = \mu$.
\end{lem}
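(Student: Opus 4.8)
The plan is to reduce everything to a single factorization and a congruence modulo $q$. First I would set $x = p^d$, so that the hypothesis becomes $v_q(x-1) = \mu \geq 1$; in particular this forces $x \equiv 1 \pmod q$. The conclusion we want is then simply $v_q(x^n - 1) = \mu$, after which we substitute back $x = p^d$ and recall $x^n = p^{nd}$.

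Next I would invoke the elementary factorization
\[
x^n - 1 = (x-1)\sum_{i=0}^{n-1} x^i .
\]
Since the $q$-adic valuation is additive over products, it suffices to show the cofactor is a $q$-adic unit, i.e. $v_q\!\left(\sum_{i=0}^{n-1} x^i\right) = 0$. To see this I would reduce the sum modulo $q$: using $x \equiv 1 \pmod q$ we get $x^i \equiv 1 \pmod q$ for every $i$, so
\[
\sum_{i=0}^{n-1} x^i \equiv n \pmod q .
\]
The hypothesis $v_q(n) = 0$ means precisely $q \nmid n$, so the sum is nonzero modulo $q$, giving valuation $0$.

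Combining these two observations yields
\[
v_q(x^n - 1) = v_q(x-1) + v_q\!\left(\sum_{i=0}^{n-1} x^i\right) = \mu + 0 = \mu ,
\]
which is the desired statement. I do not expect any genuine obstacle here: the entire argument hinges on the single observation that $\mu \geq 1$ guarantees $x \equiv 1 \pmod q$, which is exactly what collapses the geometric sum to $n$ modulo $q$ and isolates the valuation in the factor $x-1$. This lemma should be regarded as the base case $\nu = 0$ for the more delicate valuation computations (the lifting-the-exponent type estimates) that handle the case $q \mid n$ in the lemmas to follow.
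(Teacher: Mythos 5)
Your argument is correct and is essentially identical to the paper's proof: both factor $p^{nd}-1 = (p^d-1)\sum_{i=0}^{n-1} p^{id}$, observe that each of the $n$ summands is $1 \bmod q$ so the cofactor is congruent to $n \not\equiv 0 \pmod q$, and conclude by additivity of $v_q$. The substitution $x = p^d$ is only a notational convenience; nothing differs in substance.
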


\begin{proof}
\begin{align*}
v_q(p^{nd} - 1) & = v_q(p^d-1) + v_q(\underbrace{p^{(n-1)d} + p^{(n-2)d} + \cdots + p^d + 1}_{n \text{ terms, all 1 $\textup{mod } q$. } })\\
& = \mu + 0 = \mu. \qedhere
\end{align*}

\end{proof}

\begin{lem}\label{lem: q=2}
Let $p$ be an odd prime with $ \max\{v_2(p-1), v_2(p+1)\} = \mu.$ Let $v_2(n)=\nu\geq1.$ Then $v_2(p^n-1)= \mu+\nu.$
\end{lem}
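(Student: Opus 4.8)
The plan is to induct on $\nu = v_2(n)$, at each stage exploiting the factorization $p^{2m}-1 = (p^m-1)(p^m+1)$ together with the observation that the two factors on the right carry very unequal powers of $2$. (This is the $p=2$ case of the ``lifting the exponent'' lemma, but since the surrounding results are proved from scratch I would give the self-contained inductive argument rather than cite it.)

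First I would record a structural fact about $p \pm 1$. Since $p$ is odd, $p-1$ and $p+1$ are consecutive even integers, so exactly one of them is divisible by $4$; that is, one has $2$-adic valuation equal to $1$ and the other has valuation $\mu \geq 2$. In particular $v_2(p-1) + v_2(p+1) = \mu + 1$, a relation that will be used at the very end. For the base case $\nu = 1$, write $n = 2m$ with $m$ odd, so that $p^n - 1 = (p^m-1)(p^m+1)$. Because $m$ is odd I can read off $v_2(p^m-1) = v_2(p-1)$ directly from Lemma~\ref{lem: q notdiv n} (applied with $q = 2$ and $d = 1$), and $v_2(p^m+1) = v_2(p+1)$ by factoring $p^m + 1 = (p+1)(p^{m-1} - p^{m-2} + \cdots - p + 1)$ and noting that the cofactor is a sum of an odd number of odd terms, hence odd. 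Adding the valuations gives $v_2(p^n-1) = v_2(p-1) + v_2(p+1) = \mu + 1 = \mu + \nu$.

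For the inductive step, assume the claim whenever the relevant valuation is $\nu$, and take $n$ with $v_2(n) = \nu + 1$. Writing $n = 2n'$ with $v_2(n') = \nu \geq 1$, the factorization gives $v_2(p^n-1) = v_2(p^{n'}-1) + v_2(p^{n'}+1)$, and $v_2(p^{n'}-1) = \mu + \nu$ by the inductive hypothesis. The crucial point is that $n'$ is now \emph{even}, so $p^{n'}$ is an odd perfect square and therefore $p^{n'} \equiv 1 \pmod 8$; hence $p^{n'}+1 \equiv 2 \pmod 8$ contributes exactly one factor of $2$. This yields $v_2(p^n-1) = (\mu+\nu) + 1 = \mu + (\nu+1)$, closing the induction.

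The one place that requires care — and what I would flag as the main obstacle — is keeping the two ``$+1$''-type factors straight. In the base case the factor $p^m+1$ (with $m$ odd) carries the possibly large valuation $v_2(p+1)$, which may equal $\mu$, whereas at every subsequent doubling the factor $p^{n'}+1$ (with $n'$ even) contributes only a single factor of $2$. This asymmetry between odd and even exponents in $p^{k}+1$ is precisely what makes the valuation increase by exactly one at each doubling, and I would isolate the congruence $p^{n'} \equiv 1 \pmod 8$ as the clean statement of why.
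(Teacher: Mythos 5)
Your proof is correct and follows essentially the same route as the paper: induction on $\nu$, the factorization $p^{2m}-1=(p^m-1)(p^m+1)$, and the identity $v_2(p-1)+v_2(p+1)=\mu+1$ for the base case. The only cosmetic difference is in the inductive step, where you justify $v_2(p^{n'}+1)=1$ via $p^{n'}\equiv 1 \pmod 8$ for $n'$ even, whereas the paper observes that $v_2(p^{n'}-1)=\mu+\nu>1$ already forces $v_2(p^{n'}+1)=1$; both are fine.
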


\begin{proof}
We proceed by induction on $v_2(n)$.  For every odd $d$, exactly one of $p^d-1$, $p^d+1$ is divisible by $4$.  (In particular,  $\mu\geq2$.)
Similar to the proof of Lemma~\ref{lem: q notdiv n},     we have
\begin{align*}
v_2(p^{2d}-1) &= v_2(p^d-1) + v_2(p^d+1) \\
& = v_2(p-1) + v_2(\text{odd number}) + v_2(p+1) +  v_2(\text{odd number}) \\
&= \mu+1.
\end{align*}

Assume for all $n$ with $v_2(n)=\nu > 1$ we have $v_2(p^n-1)=\mu+\nu > 1$, in which case $v_2(p^n+1)=1$.
Consider some $n$ with $v_2(n) = \nu + 1$ and choose $d$ odd such that $n=2^{\nu+1}d$.

\begin{align*}
v_2(p^n-1)&=v_2(p^{2^{\nu+1}d}-1) 
=v_2(p^{2^{\nu}d}-1) + v_2(p^{2^{\nu}d}+1)\\
&=\mu+\nu+ 1. \qedhere
\end{align*} 
\end{proof}

\begin{lem}\label{lem: q odd}
Let $q$ be an odd prime.
Suppose $v_q( p^d - 1 ) = \mu \geq 1$ and $v_q(n) = \nu$.  Then $v_q( p^{nd} - 1 ) = \mu + \nu$.
\end{lem}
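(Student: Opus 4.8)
The plan is to recognize this as a ``lifting-the-exponent'' statement and to build it from Lemma~\ref{lem: q notdiv n} together with a single multiplicative step that absorbs one factor of $q$ at a time. First I would split off the prime-to-$q$ part of $n$: write $n = q^\nu m$ with $q \nmid m$, and observe that \emph{once the prime-power case is known}, the general case is immediate. Indeed, suppose we have already shown $v_q\!\left(p^{q^\nu d} - 1\right) = \mu + \nu$. Setting $D = q^\nu d$ we have $v_q\!\left(p^D - 1\right) = \mu + \nu \geq 1$ and $v_q(m) = 0$, so Lemma~\ref{lem: q notdiv n} (applied with $d$ replaced by $D$, with $\mu$ replaced by $\mu+\nu$, and with $n$ replaced by $m$) gives $v_q\!\left(p^{mD} - 1\right) = \mu + \nu$. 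Since $mD = m\,q^\nu d = nd$, this is exactly the claim. So the entire problem reduces to the case $n = q^\nu$.

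For that case I would induct on $\nu$, the base case $\nu = 0$ being vacuous. The engine of the induction is the single assertion:
\[
\text{if } v_q\!\left(p^{e} - 1\right) = \kappa \geq 1, \text{ then } v_q\!\left(p^{qe} - 1\right) = \kappa + 1.
\]
Granting this, I start from $e = d$, $\kappa = \mu$ and apply it $\nu$ times; each application is legitimate because the valuation remains $\geq 1$ at every stage, and the net effect is $v_q\!\left(p^{q^\nu d} - 1\right) = \mu + \nu$.

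To prove the engine I would set $b = p^{e}$, so $v_q(b - 1) = \kappa \geq 1$, and factor
\[
p^{qe} - 1 = b^{q} - 1 = (b - 1)\bigl(b^{q-1} + b^{q-2} + \cdots + b + 1\bigr).
\]
Writing $N$ for the second factor, it suffices to show $v_q(N) = 1$. Since $b \equiv 1 \pmod q$, the $q$ summands are each $\equiv 1$, so $N \equiv q \equiv 0 \pmod q$; the real content is the sharper statement $q^2 \nmid N$. Writing $b = 1 + q^{\kappa} t$ with $q \nmid t$ and expanding, one gets $b^{i} \equiv 1 + i\,q^{\kappa} t \pmod{q^{2}}$ (the higher binomial terms already vanish mod $q^2$ because $\kappa \geq 1$), whence
\[
N \equiv \sum_{i=0}^{q-1}\bigl(1 + i\,q^{\kappa} t\bigr) = q + q^{\kappa} t \binom{q}{2} \pmod{q^{2}}.
\]

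The hard part — and the only place any arithmetic is genuinely at stake — is controlling the term $q^{\kappa} t \binom{q}{2}$, which is precisely where oddness of $q$ enters. Since $\binom{q}{2} = \tfrac{q(q-1)}{2}$ and $q$ is odd, $\tfrac{q-1}{2} \in \ZZ$, so $\binom{q}{2}$ is divisible by $q$; hence $q^{\kappa} t \binom{q}{2} \equiv 0 \pmod{q^{2}}$ and $N \equiv q \pmod{q^{2}}$, giving $v_q(N) = 1$ exactly. I expect this $q^2 \nmid N$ computation to be the main obstacle, and it is the step that fails for $q = 2$ (there $\tfrac{q-1}{2}$ is not an integer), which is exactly why the prime $2$ is quarantined in Lemma~\ref{lem: q=2}. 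Everything else is bookkeeping: the iterated engine plus one invocation of Lemma~\ref{lem: q notdiv n}.
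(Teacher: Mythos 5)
Your proof is correct and follows essentially the same route as the paper's: the $\nu=0$ case is delegated to Lemma~\ref{lem: q notdiv n}, and the inductive engine is the binomial computation showing that raising to the $q$th power increases the valuation by exactly one, with oddness of $q$ entering through the divisibility of $\binom{q}{2}$ by $q$ (equivalently, the paper's inequality $q\mu \geq \mu+2$). The paper packages this step as $(1+kq^\mu)^q \equiv 1 + kq^{\mu+1} \pmod{q^{\mu+2}}$ rather than factoring $b^q-1=(b-1)N$ and computing $N$ modulo $q^2$, but the two computations are the same.
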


\begin{proof}
The result for $\nu = 0$ is exactly Lemma~\ref{lem: q notdiv n}.  Choose $k$ so that $p^d = 1+ kq^\mu$ (in particular $q \nmid k$).  Since $q\geq 3$ and $\mu \geq 1$, we have $q \mu \geq \mu +2$.  Hence
\[
p^{qd} = (1 + kq^\mu)^q \equiv 1 + kq^{\mu+1} \pmod{q^{\mu + 2}}.
\]
The result then follows by a straightforward induction.
\end{proof}

%%% COMMENTED OUT BECAUSE WE NEVER USE IT %%%
%\begin{lem}\label{lem: composition}
%Let $f$ be a permutation polynomial on $\FF_{p^n}$ and $g$ a any  polynomial that commutes with $f$. Then  $Per(f\circ g)=Per(g\circ f)=Per(g)$.
%\end{lem}
%\begin{proof}
%The first equality follows from the commutativity of the maps.  Since $f$ is a permutation polynomial, $f^m(\alpha)=
%\alpha$ for some $m\geq 1$.
%
%Now let $\alpha$ be periodic for $g,$ then $g^n(\alpha)=\alpha$ for some $n\geq 1$.  Let $\ell=\lcm(n,m)$. Then $(f\circ g)^{\ell}(\alpha)=f^{\ell}\circ g^{\ell}(\alpha)=f^{\ell}(\alpha)=\alpha.$ So $\alpha$ is periodic for $f\circ g.$ A similar argument is true for $g\circ f.$
%
%Similarly, if $\alpha \in \Per(g\circ f)$.  So there is some $n\geq 1$ so that $(g\circ f)^n (\alpha) = g^n\circ f^n(\alpha)  = \alpha$.  Again if $\ell = \lcm(n,m)$, then $(g\circ f)^\ell (\alpha) = \alpha$ but also $(g\circ f)^\ell (\alpha) = g^\ell \circ f^\ell(\alpha) = g^\ell(\alpha)$.
%\end{proof}

Our main results in Sections~\ref{sec: power} and~\ref{sec: Cheby} will be stated for maps of prime degree~$q$.    The following Lemma shows that in fact the proportion of periodic points is identical for the maps of degree $q$ and degree $q^e$.  We focus on the prime degree case for ease of exposition.

\begin{lem}\label{lem : primepowers}
Let $\phi(z)=z^q$ and $\psi(z)=z^{q^e}$.  Then $\Per(\phi, \FF_{p^n}) = \Per(\psi,\FF_{p^n})$ for every $n$.  Similarly, 
$\Per(T_q, \FF_{p^n}) = \Per(T_{q^e},\FF_{p^n})$
\end{lem}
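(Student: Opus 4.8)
The plan is to reduce both assertions to a single elementary fact: for any self-map $f$ of a set and any integer $e \geq 1$, the periodic points of $f$ coincide with the periodic points of its $e$-th iterate $f^e$. Granting this, it suffices to observe that $\psi$ is literally an iterate of $\phi$, and likewise $T_{q^e}$ is an iterate of $T_q$, so that both equalities fall out immediately.

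First I would record the general fact. Fix a map $f \colon K \to K$ and an integer $e \geq 1$. If $\alpha \in \Per(f, K)$, say $f^k(\alpha) = \alpha$ with $k > 0$, then iterating this relation gives $f^{jk}(\alpha) = \alpha$ for every $j \geq 1$; in particular $(f^e)^k(\alpha) = f^{ek}(\alpha) = \alpha$, so $\alpha \in \Per(f^e, K)$. Conversely, if $(f^e)^m(\alpha) = \alpha$ for some $m > 0$, then $f^{em}(\alpha) = \alpha$ with $em > 0$, so $\alpha \in \Per(f, K)$. Hence $\Per(f, K) = \Per(f^e, K)$.

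Next I would apply this fact to each family. For the power map, $\phi(z) = z^q$ yields $\phi^e(z) = z^{q^e} = \psi(z)$, so $\psi = \phi^e$ and the general fact with $f = \phi$ gives $\Per(\phi, \FF_{p^n}) = \Per(\psi, \FF_{p^n})$ for every $n$. For the Chebyshev case, the iteration formula~\eqref{eqn: cheby iter} gives $T_q^e(w) = T_{q^e}(w)$, so $T_{q^e}$ is precisely the $e$-th iterate of $T_q$, and the same argument with $f = T_q$ yields $\Per(T_q, \FF_{p^n}) = \Per(T_{q^e}, \FF_{p^n})$.

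I do not expect any genuine obstacle here; the only point deserving care is the first inclusion, where one must use that $f^k(\alpha) = \alpha$ forces $f^{ek}(\alpha) = \alpha$ rather than merely $f^{ek}(\alpha) \in \OO_\phi(\alpha)$. Everything else is the identification of $\psi$ and $T_{q^e}$ as iterates, which is immediate from the explicit iteration formula for power maps and from~\eqref{eqn: cheby iter}.
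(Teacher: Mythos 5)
Your proof is correct and follows essentially the same route as the paper: the paper likewise observes that $\psi^m = \phi^{em}$, deduces $\phi^m(\alpha)=\alpha \Rightarrow \psi^m(\alpha)=\alpha$ and $\psi^m(\alpha)=\alpha \Rightarrow \phi^{em}(\alpha)=\alpha$, and then invokes~\eqref{eqn: cheby iter} for the Chebyshev case. Your only addition is packaging this as the general statement $\Per(f,K)=\Per(f^e,K)$, which is a harmless (and slightly cleaner) abstraction of the same argument.
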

\begin{proof}
Note that $\phi^m(z) = z^{q^m}$ and $\psi^m(z) = z^{q^{em}}$.  So if $\phi^m(\alpha) = \alpha$, then likewise $\psi^m(\alpha) = \alpha$.  On the other hand, if $\psi^m(\alpha) = \alpha$, then $\phi^{em}(\alpha) = \alpha$.  Applying the iteration for Chebychev polynomials in~\eqref{eqn: cheby iter} gives the result in that case as well.
\end{proof}

\section{Power maps}\label{sec: power}
Throughout this section, we fix the polynomial
\[
\phi(z) = z^{q},
\]
for $q$ prime.  We also take $p$ to be any prime different from $q$.  Our interest is in understanding the proportion of periodic points in $\FF_{p^n}$ as $n$ grows.  In particular, we consider the following limits.

\begin{define}\label{def:periodic proportion}
 We define the following proportions for integers $\nu \geq 0$.  Recall that $\delta$ is the multiplicative order of $p$ modulo $q$.
\[
P_{\nu}(\phi) = \lim_{\substack{
n \to \infty\\
\delta \mid n\\
v_q(n)=\nu }}\frac{\# \Per\left(\phi, \FF_{p^n}\right)}{p^n}.
\]
\end{define}

Since $\delta = \ord_q(p)$, we know that $\delta < q$.  So if $n$ satisfies 
\begin{align*}
\delta \mid n &\text { and }v_q(n) = \nu,\\
\intertext{then there is $n'$ such that }
 n = \delta n' &\text{ and } v_q(n') = \nu.
 \end{align*}
   We will implicitly use this fact later when applying Lemma~\ref{lem: q odd}.

We begin by classifying explicitly the periodic points of $\phi$ in  $\FF_{p^n}$.

\begin{lem}\label{lem: periodic}
Let $p^n -1 = q^ed$ with $q \nmid d$.  Then
\[
\Per(\phi, \FF_{p^n}) = \{ 0 \} \cup \{ \alpha \in \FF_{p^n} \colon \alpha^d = 1 \}.
\]
\end{lem}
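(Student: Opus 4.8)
The plan is to exploit the cyclic structure of $\FF_{p^n}^*$ together with Lemma~\ref{lem: finitefieldperpts}. First I would dispense with the origin: since $\phi(0) = 0$, the point $0$ is fixed and hence periodic, so the substance of the statement concerns the nonzero points. Here I recall that $\FF_{p^n}^*$ is cyclic of order $p^n - 1 = q^e d$ with $q \nmid d$, and that $\phi$ acts on this group as the endomorphism $\alpha \mapsto \alpha^q$.

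For the inclusion $\supseteq$, set $S = \{0\} \cup \{\alpha \in \FF_{p^n} \colon \alpha^d = 1\}$. I would verify that $\phi(S) = S$ and then invoke Lemma~\ref{lem: finitefieldperpts}. Since $d \mid p^n - 1$, the $d$th roots of unity form a subgroup of $\FF_{p^n}^*$ of order $d$, and for such $\alpha$ we have $(\alpha^q)^d = (\alpha^d)^q = 1$, so $\phi$ maps this subgroup into itself. Because $q \nmid d$, the map $\alpha \mapsto \alpha^q$ is injective on a group of order $d$ (its kernel consists of $q$th roots of unity, and $\gcd(q,d)=1$), hence surjective by finiteness; together with $\phi(0)=0$ this gives $\phi(S) = S$, and Lemma~\ref{lem: finitefieldperpts} yields $S \subseteq \Per(\phi, \FF_{p^n})$.

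For the reverse inclusion, suppose $\alpha \neq 0$ is periodic, so that $\phi^m(\alpha) = \alpha^{q^m} = \alpha$ for some $m > 0$ and therefore $\alpha^{q^m - 1} = 1$. Then the multiplicative order of $\alpha$ divides both $q^m - 1$ and $p^n - 1 = q^e d$. Since $q^m - 1$ is coprime to $q$, the order of $\alpha$ is coprime to $q$, and dividing $q^e d$ while coprime to $q$ forces it to divide the $q$-free part $d$. Thus $\alpha^d = 1$, i.e. $\alpha \in S$, and combining the two inclusions gives the asserted equality.

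The argument is essentially routine once the cyclic structure is in hand; the only point requiring care is the coprimality bookkeeping in the reverse inclusion, namely that an order coprime to $q$ and dividing $q^e d$ must divide $d$. One could alternatively bypass Lemma~\ref{lem: finitefieldperpts} and prove both directions by the order criterion: a nonzero $\alpha$ is periodic if and only if $\ord(\alpha)$ is coprime to $q$, where the converse direction follows by taking $m = \ord_b(q)$ for $b = \ord(\alpha) \mid d$, so that $b \mid q^m - 1$ and hence $\alpha^{q^m} = \alpha$.
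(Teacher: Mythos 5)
Your proof is correct, and the two halves relate to the paper's argument in different ways. For the inclusion $\supseteq$ you do exactly what the paper does: observe that $q \nmid d$ makes $\alpha \mapsto \alpha^q$ a bijection of the group of $d$th roots of unity, so the set $S$ is forward invariant and Lemma~\ref{lem: finitefieldperpts} applies. For the reverse inclusion your route is genuinely different. The paper factors $z^{p^n}-z = z(z^d-1)Q(z)$ and argues contrapositively: a root $\alpha$ of $Q$ satisfies $\alpha^{q^i}\in\{\beta : \beta^d=1\}$ for some $i$, so its orbit falls into the forward-invariant set $S$ without $\alpha$ itself lying in $S$, and hence can never return to $\alpha$. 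You instead argue directly: $\phi^m(\alpha)=\alpha$ gives $\alpha^{q^m-1}=1$, so $\ord(\alpha)$ divides both $q^m-1$ (which is prime to $q$) and $q^e d$, forcing $\ord(\alpha)\mid d$. Your version is shorter and makes the underlying criterion explicit --- a nonzero point is periodic if and only if its multiplicative order is prime to $q$ --- which is also the cleaner statement to generalize (it is essentially what the paper reuses implicitly in Proposition~\ref{compositecase}). The paper's version, by contrast, exhibits the strictly preperiodic points concretely as the roots of $Q(z)$ and shows how their orbits drain into the periodic core, which is the picture behind Figures~\ref{fig:lots of perper} and~\ref{fig:lots of per}. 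The one step in your write-up worth spelling out is the divisibility bookkeeping you already flag: an integer dividing $q^e d$ and coprime to $q$ must divide $d$, which is immediate from $\gcd(b,q^e)=1$.
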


\begin{proof}
The defining equation for $\FF_{p^n}$ is 
\begin{equation}\label{eqn : Fpn}
z^{p^n}-z=
z(z^d -1) Q(z),
\end{equation}
for some monic $Q(z) \in \ZZ[z]$.
Clearly $0$ is fixed by $\phi$.  Since $q \nmid d$, the roots of $z^d - 1$ form a group of  order prime to $q$.  Hence $\phi(z) = z^q$ is a permutation of the group elements, and these roots are forward invariant under $\phi$.  So we have 
\[
 \{ 0 \} \cup  \{ \alpha \in \FF_{p^n} \colon \alpha^d = 1 \} \subseteq \Per(\phi, \FF_{p^n}) .
\]
Now let $\alpha$ be a root of $Q(z)$; so in particular $\alpha^{q^ed} = 1$ but $\alpha^d \neq 1$.  Hence for some $1 \leq i \leq e$ and some $d' \mid d$,  we have $\alpha^{q^i d'}= 1$.  In other words, $\alpha^{q^i}$ has order dividing $d$ and  is therefore a root of $z^d -1$.  Since roots of $z^d-1$ are forward invariant under~$\phi$,  $\alpha$ is not periodic for $\phi$.
 \end{proof}
 
 \begin{remark*}
 We applied Lemma~\ref{lem: periodic} to create the examples in Figures~\ref{fig:lots of perper} and~\ref{fig:lots of per}.   Finding a value of $p^n-1$   where, in the notation of the Lemma, $q^e$ is much smaller than $d$ gives ``lots of periodic points.''  Similarly, an example where  $q^e$ is relatively large compared with $d$ gives few periodic points.
 \end{remark*}
 
 The following Proposition justifies our choice of limit in Definition~\ref{def:periodic proportion} because the only interesting proportions of periodic points are those where $\ord_q(p) =\delta \mid n$.
 
 \begin{prop}\label{prop:permpoly}
 If $\ord_q(p) = \delta \nmid n$, all points of $\FF_{p^n}$ are periodic under $\phi$.
 \end{prop}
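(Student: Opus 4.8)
The plan is to reduce the statement directly to Lemma~\ref{lem: periodic}. The one fact that does all the work is the elementary equivalence
\[
\delta \mid n \iff q \mid p^n - 1,
\]
which holds because $\delta = \ord_q(p)$ is by definition the multiplicative order of $p$ in $(\ZZ/q\ZZ)^*$, so $p^n \equiv 1 \pmod q$ precisely when $\delta \mid n$. Thus the hypothesis $\delta \nmid n$ is exactly the statement that $q \nmid p^n - 1$. I would record this equivalence first, since it is also what motivates the divisibility condition $\delta \mid n$ imposed in Definition~\ref{def:periodic proportion}.

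Next I would write $p^n - 1 = q^e d$ with $q \nmid d$, matching the notation of Lemma~\ref{lem: periodic}. Since $q \nmid p^n - 1$, we are in the case $e = 0$ and $d = p^n - 1$. Lemma~\ref{lem: periodic} then gives
\[
\Per(\phi, \FF_{p^n}) = \{0\} \cup \{\alpha \in \FF_{p^n} \colon \alpha^{p^n - 1} = 1\}.
\]
As the multiplicative group $\FF_{p^n}^{*}$ has order $p^n - 1$, every nonzero element satisfies $\alpha^{p^n - 1} = 1$, so the right-hand side is all of $\FF_{p^n}$, and every point is periodic.

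If one prefers not to invoke the earlier lemma, a self-contained variant is available: when $q \nmid p^n - 1$, raising to the $q$-th power is an automorphism of the cyclic group $\FF_{p^n}^{*}$ (a power map is bijective on a cyclic group exactly when the exponent is coprime to the group order), and it fixes $0$, so $\phi$ permutes $\FF_{p^n}$. Then $\phi(\FF_{p^n}) = \FF_{p^n}$, and Lemma~\ref{lem: finitefieldperpts} yields $\FF_{p^n} \subseteq \Per(\phi, \FF_{p^n})$ at once. This is the same observation already noted in the introduction, that $\phi$ acts as a permutation polynomial exactly when $n$ is prime to $\delta$.

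I do not expect any genuine obstacle. The entire content is the equivalence $\delta \mid n \iff q \mid p^n - 1$; once it is stated, either Lemma~\ref{lem: periodic} or Lemma~\ref{lem: finitefieldperpts} closes the argument in a single line. The only care needed is to make that equivalence explicit rather than leaving it implicit in the phrase ``permutation polynomial.''
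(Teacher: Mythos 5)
Your proof is correct and follows the paper's argument exactly: the paper also observes that $\delta \nmid n$ implies $q \nmid p^n - 1$ and then cites Lemma~\ref{lem: periodic}, which in that case gives $d = p^n-1$ and hence all of $\FF_{p^n}$ as periodic points. Your extra detail (and the alternative via Lemma~\ref{lem: finitefieldperpts}) is fine but not a different route.
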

 
 \begin{proof}
Since $\delta \nmid n$, $q \nmid p^n - 1$.
 The result  follows immediately from Lemma~\ref{lem: periodic}.    
  \end{proof}

We now prove our main results for pure power maps.  The statement is slightly different depending on whether $q=2$ or $q$ is an odd prime.  The difference parallels exactly the difference between the valuation calculations in Lemmas~\ref{lem: q=2} and~\ref{lem: q odd}.

\begin{thm}\label{thm: q=2}
Let  $v_2(p-1)= \lambda$ and $\max\{v_2(p-1), v_2(p+ 1)\} = \mu$. Then for $\phi(z) = z^2$ we have
\begin{align*}
P_0(\phi) &= \frac 1{2^\lambda}, && \text{and}\\
P_{\nu}(\phi)&=\frac{1}{2^{\mu+\nu}} && \text{for }\nu \geq 1.
\end{align*}
\end{thm}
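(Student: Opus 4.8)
The plan is to reduce everything to the explicit description of periodic points in Lemma~\ref{lem: periodic}, combined with the valuation computations in Lemmas~\ref{lem: q notdiv n} and~\ref{lem: q=2}. First I would observe that since $p$ is odd we have $\delta = \ord_2(p) = 1$, so the divisibility condition $\delta \mid n$ appearing in the definition of $P_\nu(\phi)$ is automatic; the two towers are indexed simply by the parity of $n$, namely $v_2(n) = \nu$.

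Next I would count periodic points exactly. Writing $p^n - 1 = 2^e d$ with $d$ odd, Lemma~\ref{lem: periodic} gives $\Per(\phi, \FF_{p^n}) = \{0\} \cup \{\alpha \in \FF_{p^n} : \alpha^d = 1\}$. Since $d \mid p^n - 1$, the cyclic group $\FF_{p^n}^*$ contains exactly $d$ elements of order dividing $d$, and none of these equals $0$, so $\#\Per(\phi, \FF_{p^n}) = 1 + d = 1 + (p^n-1)/2^{v_2(p^n-1)}$. Dividing by $p^n$ yields
\[
\frac{\#\Per(\phi, \FF_{p^n})}{p^n} = \frac{1}{p^n} + \frac{p^n - 1}{p^n}\cdot \frac{1}{2^{v_2(p^n-1)}}.
\]

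The crux of the argument is then showing that $v_2(p^n - 1)$ is constant along each tower. For $\nu = 0$ (that is, $n$ odd), applying Lemma~\ref{lem: q notdiv n} with $q = 2$ and $d = 1$ gives $v_2(p^n - 1) = v_2(p-1) = \lambda$. For $\nu \geq 1$, Lemma~\ref{lem: q=2} gives directly $v_2(p^n - 1) = \mu + \nu$. In either case the valuation is independent of $n$ within the tower, so the middle factor $1/2^{v_2(p^n-1)}$ is the constant $1/2^{\lambda}$ or $1/2^{\mu+\nu}$ respectively. This is precisely why the statement splits into the $\nu = 0$ and $\nu \geq 1$ cases, mirroring the two valuation lemmas.

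Finally, letting $n \to \infty$ we have $1/p^n \to 0$ and $(p^n - 1)/p^n \to 1$, so the limit equals $1/2^{\lambda}$ when $\nu = 0$ and $1/2^{\mu+\nu}$ when $\nu \geq 1$, as claimed. I expect no serious obstacle: the only points requiring care are the root count (invoking that $d$ divides the group order $p^n - 1$, so there are exactly $d$ solutions of $\alpha^d = 1$) and correctly matching each valuation lemma to the parity of $n$.
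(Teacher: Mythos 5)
Your proposal is correct and follows essentially the same route as the paper: it invokes Lemma~\ref{lem: periodic} to identify the periodic points as $\{0\}$ together with the $d$ roots of $z^{d}-1$ where $p^n-1 = 2^{v_2(p^n-1)}d$, applies Lemma~\ref{lem: q notdiv n} for odd $n$ and Lemma~\ref{lem: q=2} for $v_2(n)=\nu\geq 1$ to pin down the valuation, and passes to the limit. The only (harmless) additions are your explicit remark that $\delta=\ord_2(p)=1$ makes the condition $\delta\mid n$ vacuous and the slightly rearranged form of the ratio before taking the limit.
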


\begin{proof}
First consider $n$ odd.   By Lemma~\ref{lem: q notdiv n}, 
we may choose $d_n$ odd so that $p^n-1=2^\lambda d_n$.   By Lemma ~\ref{lem: periodic} the periodic points for $\phi$ in $\FF_{p^n}$ are $0$ and roots of $z^{d_n}-1$.  
So there are $d_n+1$ points in $\Per(\phi, \FF_{p^n})$. 
Then 
\[
P_0(\phi) = 
\lim_{\substack{
n \to \infty\\
n \text{ odd}}}
\frac{\#\Per(\phi, \FF_{p^n})}{p^n}=
\lim_{d_n \to \infty}
\frac{d_n+1}{2^\lambda d_n+1}
=
\lim_{\substack{
d \to \infty\\
d \text{ odd}}}
\frac{d+1}{2^\lambda d+1}
=\frac{1}{2^\lambda}.  
\]

Now let $v_2(n) = \nu \geq 1$.  By Lemma~\ref{lem: q=2},   $p^n-1=2^{\mu+\nu}d_n$ with $d_n$  odd. Again, the periodic points for $\phi$ in $\FF_{p^n}$ are $0$ and roots of $z^{d_n}-1$.  
Hence
\[
P_{\nu}(\phi)= 
\lim_{\substack{
n \to \infty\\
v_2(n)=\nu}}
\frac{\#\Per(\phi,\FF_{p^n})}{p^n}
=
\lim_{\substack{
d \to \infty\\
d \text{ odd}}}
\frac{d+1}{2^{\mu+\nu}d+1}
=\frac{1}{2^{\mu+\nu}}. \qedhere
\]
\end{proof}

In Tables~\ref{z2tab1}--\ref{z2tab2}, we illustrate Theorem~\ref{thm: q=2}.  The data  were calculated using Sage~\cite{sage}.

\begin{table}[!htp]
\centering
\begin{tabular}{c || c c c c}
$p$ & 3 & 5 & 41 & 17 \\
$\lambda = v_2(p-1)$ & 1 & 2 & 3 & 4\\ \hline \hline
\\[.001in]
$\displaystyle \frac{\#\Per(z^2, \FF_{p})}{p}$ & 0.666666667 \quad &0.400000000 \quad & 0.146341463 
\quad & 0.117647059  \\[.2in]
$\displaystyle \frac{\#\Per(z^2, \FF_{p^3})}{p^3}$ & 0.518518518 & 0.256000000 & 0.125012696 
& 0.0626908203 \\[.2in]
$\displaystyle \frac{\#\Per(z^2, \FF_{p^5})}{p^5}$ & 0.502057613 & 0.250240000 & 0.125000008
&0.0625006603  \\[.2in]
$\displaystyle \frac{\#\Per(z^2, \FF_{p^7})}{p^7}$ & 0.500228624 & 0.250009600 & 0.125000000
& 0.0625000023 \\[.2in] \hline\hline
%\\
$\displaystyle \frac{1}{2^\lambda}$ & $0.5$&$ 0.25 $&$ 0.125$ &$ 0.0625$\\[.2in]
\end{tabular}
\caption{$\displaystyle \frac{\#\Per(z^2, \FF_{p^n})}{p^n}$ with $n$ odd.}
\label{z2tab1}
\end{table}

\begin{table}[!htp]
\centering
\begin{tabular}{c || c c c}
$p$ & 3 & 7 & 17 \\
$\mu = \max\{ v_2(p-1), v_2(p+1)\}$ & 2 & 3 & 4\\ \hline \hline
\\[.001in]
$\displaystyle \frac{\#\Per(z^2, \FF_{p^2})}{p^2}$ & 0.222222222 \quad &0.0816326530 \quad & 0.0346020761 
  \\[.2in]
$\displaystyle \frac{\#\Per(z^2, \FF_{p^6})}{p^6}$ & 0.126200274 & 0.0625079686 & 0.0312500401 
 \\[.2in]
$\displaystyle \frac{\#\Per(z^2, \FF_{p^{10}})}{p^{10}}$ & 0.125014818 & 0.0625000033 & 0.0312500000
  \\[.2in]
$\displaystyle \frac{\#\Per(z^2, \FF_{p^{14}})}{p^{14}}$ & 0.125000183 & 0.0625000000 & 0.0312500000
 \\[.2in] \hline\hline
$\displaystyle \frac{1}{2^{\mu+1}}$ & $ 0.125 $&$ 0.0625$ &$ 0.03125$  \\[.2in]
\end{tabular}
\caption{$\displaystyle \frac{\#\Per(z^2, \FF_{p^n})}{p^n}$ with $v_2(n) = 1$.}
\label{z2tab2}
\end{table}

\begin{thm}\label{oddcase}
Let $q$ be an odd prime.
We continue with the earlier notation: $\delta = \ord_q(p)$ and
 $v_q(p^\delta-1)= \mu \geq 1$. For $\phi(z) = z^q$, we have 
\[
P_\nu(\phi) = \frac 1{q^{\mu+\nu}}.
\]
\end{thm}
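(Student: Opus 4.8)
The plan is to mirror the structure of the proof of Theorem~\ref{thm: q=2}, replacing the $2$-adic valuation computations with the odd-prime computation of Lemma~\ref{lem: q odd}. The key input is Lemma~\ref{lem: periodic}, which tells us that if we write $p^n - 1 = q^e d$ with $q \nmid d$, then $\Per(\phi, \FF_{p^n})$ consists of $0$ together with the roots of $z^d - 1$. Since $d \mid p^n - 1$ and $\FF_{p^n}^*$ is cyclic of order $p^n - 1$, there are exactly $d$ such roots, so $\#\Per(\phi, \FF_{p^n}) = d + 1$. Thus the entire problem reduces to computing $e = v_q(p^n - 1)$ along the tower.

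First I would fix $n$ with $\delta \mid n$ and $v_q(n) = \nu$. As noted just before the theorem, because $\delta = \ord_q(p) < q$ we have $q \nmid \delta$, so we may write $n = \delta n'$ with $v_q(n') = \nu$. Applying Lemma~\ref{lem: q odd} with exponent $\delta$ (using $v_q(p^\delta - 1) = \mu$ and $v_q(n') = \nu$) gives $v_q(p^n - 1) = v_q(p^{\delta n'} - 1) = \mu + \nu$. Hence $e = \mu + \nu$ is constant along the tower, and $d_n = (p^n - 1)/q^{\mu + \nu}$.

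It then remains to take the limit. Substituting the count $\#\Per(\phi, \FF_{p^n}) = d_n + 1$ gives
\[
\frac{\#\Per(\phi, \FF_{p^n})}{p^n} = \frac{1}{p^n}\left( \frac{p^n - 1}{q^{\mu+\nu}} + 1 \right),
\]
and letting $n \to \infty$ (so $p^n \to \infty$) along the tower yields the claimed limit $P_\nu(\phi) = 1/q^{\mu+\nu}$.

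There is no serious obstacle here: once Lemma~\ref{lem: q odd} is in hand the argument is a direct computation, exactly parallel to the odd-$n$ and $v_2(n) \geq 1$ cases of Theorem~\ref{thm: q=2}. The only point requiring a moment's care is the reduction $n = \delta n'$ with $v_q(n') = \nu$, which is precisely where the hypothesis $\delta \mid n$ together with the inequality $\delta < q$ are used; this is what licenses the application of Lemma~\ref{lem: q odd} with the valuation of $n'$ rather than of $n$, and it guarantees that the exponent $e$ stabilizes so that the limit exists.
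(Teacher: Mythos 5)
Your proposal is correct and follows essentially the same route as the paper: Lemma~\ref{lem: periodic} gives the count $d_n+1$, Lemma~\ref{lem: q odd} (applied via the reduction $n=\delta n'$ with $v_q(n')=\nu$, which the paper flags just before Definition~\ref{def:periodic proportion} as an implicit step) pins down $v_q(p^n-1)=\mu+\nu$, and the limit follows. Your only addition is making the $n=\delta n'$ reduction and the root count in the cyclic group $\FF_{p^n}^*$ explicit, which the paper leaves implicit.
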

\begin{proof}
Recall that the limit for $P_\nu(\phi)$ is taken over $n$ such that $\delta \mid n$ and $v_q(n) = \nu$.
By Lemma~\ref{lem: q odd}, for such $n$ we have $p^n - 1 = q^{\mu+\nu}d_n$ with $q \nmid d_n$, and 
by Lemma~\ref{lem: periodic} the periodic points are $0$ and roots of $z^{d_n}-1.$ 
So 
\begin{align*}
P_\nu(\phi) = &
\lim_{\substack{
n \to \infty\\
\delta \mid n\\
v_q(n) = \nu}}
\frac{\#\Per(\phi, \FF_{p^n})}{p^n}=
\lim_{\substack{
n \to \infty\\
\delta \mid n\\
v_q(n) = \nu}}
\frac{d_n+1}{q^{\mu+\nu}d_n+1}\\
&=
\lim_{\substack{
d \to \infty\\
q \nmid d}}
\frac{d+1}{q^{\mu+\nu}d+1}
=\frac{1}{q^{\mu+\nu}}.  \qedhere
\end{align*}
\end{proof}

Tables~\ref{z3tab1}--\ref{z3tab2} illustrate Theorem~\ref{oddcase}  for the map $\phi(z) = z^3$.  Again, the data  were calculated using Sage~\cite{sage}.

\begin{table}[!htp]
\centering
\begin{tabular}{c || c c c }
$p$ &  5 & 19 & 53 \\
$\delta = \ord_3(p)$ & 2 & 1 & 2\\
$\mu = v_3(p^\delta-1)$ & 1 & 2 & 3 \\ \hline \hline
\\[.001in]
$\displaystyle \frac{\#\Per(z^3, \FF_{p^\delta})}{p^\delta}$ & 0.360000000 \quad &0.157894737 \quad & 0.0373798505 
  \\[.2in]
$\displaystyle \frac{\#\Per(z^3, \FF_{p^{2\delta}})}{p^{2\delta}}$ & 0.334400000 & 0.113573407 & 0.0370371591 
 \\[.2in]
$\displaystyle \frac{\#\Per(z^3, \FF_{p^{4\delta}})}{p^{4\delta}}$ & 0.333335040 & 0.111117932 & 0.0370370371
 \\[.2in] \hline\hline
$\displaystyle \frac{1}{3^\mu}$ & $0.333333333$&$ 0.111111111 $&$ 0.0370370370$ \\[.2in]
\end{tabular}
\caption{$\displaystyle \frac{\#\Per(z^3, \FF_{p^n})}{p^n}$ with $v_3(n) = 0$.}
\label{z3tab1}
\end{table}

\begin{table}[!htp]
\centering
\begin{tabular}{c || c c c}
$p$ &  5 & 19 & 53 \\
$\delta = \ord_3(p)$ & 2 & 1 & 2\\
$\mu = v_3(p^\delta-1)$ & 1 & 2 & 3 \\ \hline \hline
\\[.01in]
$\displaystyle \frac{\#\Per(z^3, \FF_{p^{3\delta}})}{p^{3\delta}}$
& 0.111168000 \quad &0.0371774311 \quad & 0.0123456791 
  \\[.2in]
$\displaystyle \frac{\#\Per(z^3, \FF_{p^{6\delta}})}{p^{6\delta}}$ 
& 0.111111115 & 0.0370370575 & 0.0123456790 
 \\[.2in]
$\displaystyle \frac{\#\Per(z^3, \FF_{p^{12\delta}})}{p^{12\delta}}$ 
& 0.111111111 & 0.0370370370 & 0.0123456790
 \\[.2in] \hline\hline
$\displaystyle \frac{1}{3^{\mu+1}}$ & $0.111111111$&$ 0.0370370370 $&$ 0.0123456790$ \\[.2in]
\end{tabular}
\label{z3tab2}
\caption{$\displaystyle \frac{\#\Per(z^3, \FF_{p^n})}{p^n}$ with $v_3(n) = 1$.}
\end{table}

We wish to extend our results to polynomials with composite degree.  Lemma~\ref{lem : primepowers} takes care of prime power degree, so we are left to consider the case $\phi(z)=z^t$ for  $t = q_1^{f_1}q_2^{f_2}\cdots q_r^{f_r}$ and $r \geq 2$.  
For each $1 \leq i \leq r$, let 
\[
\delta_i =\ord_{q_i}(p)\quad  \text{ and}  \quad
\mu_i =v_{q_i}(p^{\delta_i}-1). 
\]
We also define 
\[
\Delta = \lcm\{ \delta_i\}_{1 \leq i \leq r}.
\]
 An argument identical to the one in Proposition~\ref{prop:permpoly2}  shows that if $\gcd(\Delta, n) = 1$, then all points of $\FF_{p^n}$ will be periodic.  Unlike the case of prime degree, however, we need not require $\Delta \mid n$  to have a nontrivial ratio of periodic points.  
 
In order to define the appropriate towers of fields, we need a bit more notation.
 For each nonempty subset $I \subseteq \{1, 2, \ldots, r\}$, let
\[
\delta_I = \lcm\{\delta_i\}_{i \in I} .
\]
If $\delta_I = \delta_{I'}$, then $\delta_{I \cup I'} = \delta_I$ as well.  Hence to  a fixed value of $\delta$ we will associate the \emph{maximal} subset $J\subseteq \{1,2,\ldots, r\}$ such that $\delta_J \mid  \delta$.
 Finally, given an integer $n$, we define an $r$-tuple of valuations
\[
v(n) = \langle v_{q_i}(n)\rangle_{1 \leq i \leq r}.
\]

We now have the tools to define limiting proportions of periodic points  along appropriate towers of finite fields.    Define
\[
P_{\delta,\nu}(\phi) =
 \lim_{\substack{
n \to \infty\\
\gcd(\Delta, n) = \delta\\
v(n)=\langle \nu_i\rangle}}
\frac{\# \Per\left(\phi, \FF_{p^n}\right)}{p^n}.
\]

\begin{prop}\label{compositecase} Let $\phi(z)=z^t$ where $t=q_1^{f_1}q_2^{f_2}\ldots q_r^{f_r}$,   with $q_i$ distinct odd primes for $1\leq i\leq r.$  
 Then for $J\subseteq \{1, 2, \ldots, r\}$ maximal with $\delta_J \mid \delta$,
\[
P_{\delta, \nu}(\phi) = \prod_{j \in J}\frac 1{q_j^{\mu_j+\nu_j}}.
\]
\end{prop}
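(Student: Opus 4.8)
The plan is to follow the template of the prime-degree case (Theorem~\ref{oddcase}), replacing the single prime $q$ by the full set $\{q_1,\dots,q_r\}$ and keeping careful track of which of these primes actually divides $p^n-1$.

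First I would generalize Lemma~\ref{lem: periodic} to the composite map $\phi(z)=z^t$. Write $p^n-1 = d\cdot\prod_{i=1}^r q_i^{e_i}$, where $e_i = v_{q_i}(p^n-1)$ and $\gcd(d,t)=1$. Decomposing the cyclic group $\FF_{p^n}^*$ into its $q_i$-Sylow subgroups and its prime-to-$t$ part $G_d = \{\alpha : \alpha^d = 1\}$, the map $\alpha\mapsto\alpha^t$ acts as an automorphism on $G_d$ (raising to a power coprime to $|G_d|=d$) and as an endomorphism on each $q_i$-Sylow factor whose iterates send every element to the identity (since $q_i\mid t$). Hence $\alpha\in\FF_{p^n}^*$ is periodic precisely when its order is prime to $t$, i.e. when $\alpha^d=1$; together with the fixed point $0$ this gives
\[
\Per(\phi, \FF_{p^n}) = \{0\} \cup \{\alpha \in \FF_{p^n} : \alpha^d = 1\}, \qquad \#\Per(\phi, \FF_{p^n}) = d+1 .
\]
Alternatively one can argue verbatim as in Lemma~\ref{lem: periodic}, invoking Lemma~\ref{lem: finitefieldperpts} for the forward invariance of $G_d$.

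The heart of the argument is then the valuation bookkeeping. For a fixed $i$, if $\delta_i\nmid n$ then $q_i\nmid p^n-1$, so $e_i=0$; if $\delta_i\mid n$, then writing $n=\delta_i n'$ (so that $v_{q_i}(n')=v_{q_i}(n)=\nu_i$, because $\delta_i<q_i$ forces $q_i\nmid\delta_i$) Lemma~\ref{lem: q odd} gives $e_i=v_{q_i}(p^n-1)=\mu_i+\nu_i$. Next I would identify the set $J$ with $\{i:\delta_i\mid n\}$: since each $\delta_i\mid\Delta$, we have $\delta_i\mid n \iff \delta_i\mid\gcd(\Delta,n)=\delta$, and because $\delta_J=\lcm_{j\in J}\delta_j$ divides $\delta$ iff every $\delta_j$ with $j\in J$ does, the maximal such $J$ is exactly $\{i:\delta_i\mid\delta\}=\{i:\delta_i\mid n\}$. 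Consequently $\prod_i q_i^{e_i}=\prod_{j\in J}q_j^{\mu_j+\nu_j}=:C$, a constant independent of $n$ throughout the tower.

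Finally I would pass to the limit. From $p^n-1=Cd$ with $C$ fixed and $d\to\infty$ as $n\to\infty$ along the tower,
\[
\frac{\#\Per(\phi,\FF_{p^n})}{p^n} = \frac{d+1}{Cd+1} \longrightarrow \frac{1}{C} = \prod_{j\in J}\frac{1}{q_j^{\mu_j+\nu_j}},
\]
which is the claim. The only genuinely delicate point is the combinatorial identification $J=\{i:\delta_i\mid n\}$, together with the observation that the prescribed valuations $\nu_i$ for indices $i\notin J$ are irrelevant to the answer (those primes contribute a factor $q_i^{0}=1$); everything else is a direct transcription of the prime-degree computation in Theorem~\ref{oddcase}.
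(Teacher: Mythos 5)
Your proposal is correct and follows essentially the same route as the paper: classify $\Per(\phi,\FF_{p^n})$ as $\{0\}\cup\{\alpha:\alpha^{d_n}=1\}$ by extending Lemma~\ref{lem: periodic}, compute the exponents $e_j=\mu_j+\nu_j$ via Lemma~\ref{lem: q odd}, and pass to the limit $\frac{d+1}{Cd+1}\to 1/C$. The only difference is that you explicitly justify two steps the paper leaves implicit --- the Sylow-style argument for the periodic-point classification and the identification $J=\{i:\delta_i\mid n\}$ --- which is a welcome filling of detail rather than a change of method.
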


\begin{remark*}
If no $\delta_i \mid \delta$, then the maximal set $J$ is empty, and we recover the fact that all points in $\FF_{p^n}$ are periodic in this case.  This theorem also recovers our result in Theorem~\ref{oddcase} when applied to the case $t = q$ for $q$ an odd prime.
\end{remark*}

\begin{proof}
Since $J$ is maximal such that $\delta_J \mid \gcd(\Delta, n) $, we have 
\[
p^n-1=d_n \prod_{j \in J} q_j^{e_j} \quad \text{ with } \gcd(t, d_n) = 1.
\]
Lemma~\ref{lem: q odd} shows that $e_j = v_{q_j}(p^n-1) = \mu_j + \nu_j$ for each $j \in J$.

The proof of Lemma~\ref{lem: periodic} extends easily to this case, and we have 
 \[
 \Per\left(\phi,\FF_{p^n}\right)=\{0\}\cup \{\alpha\in \FF_{p^n} : \alpha^{d_n}=1\}.
 \] 
   Hence
\begin{align*}
P_{\delta,\nu}(\phi) &=
 \lim_{\substack{
n \to \infty\\
\gcd(\Delta, n) = \delta\\
v(n)=\langle \nu_i\rangle}}
\frac{\# \Per\left(\phi, \FF_{p^n}\right)}{p^n}.\\
&=
 \lim_{\substack{
d_n \to \infty\\
\gcd(t, d_n) = 1}}
\frac{d_n+1}{d_n \prod_{j \in J} q_j^{\mu_j + \nu_j} + 1 }
=
\prod_{j \in J}\frac 1{q_j^{\mu_j+\nu_j}}.\qedhere
\end{align*}
\end{proof}

In Tables~\ref{z^15tab1}--\ref{z^15tab2}  we use data from Sage~\cite{sage} to illustrate Theorem~\ref{compositecase}  for the map $\phi(z) = z^{15}$ over fields $\FF_{2^n}$.  In the notation of the theorem, we have the following:
\begin{align*}
 q_1 &= 3 & q_2 & = 5 & p &=2 \\
 \delta_1 & = 2 & \delta_2 & = 4 & \Delta & = 4\\
 \mu_1 & = v_3(2^2-1) = 1 & \mu_2&= v_5(2^4-1) = 1.\\
\end{align*}
The table contains values of $n$ with $\gcd(4,n) = \delta$.

\begin{table}[!htp]
\centering
\begin{tabular}{c || c c c }
$\delta $ & 1 & 2 & 4\\ \hline\hline
\\[.001in]
$\displaystyle \frac{\#\Per(z^{15}, \FF_{2^\delta})}{2^\delta}$ & 1.00000000 \quad &0.500000000 \quad & 0.125000000 
  \\[.2in]
$\displaystyle \frac{\#\Per(z^{15}, \FF_{2^{7\delta}})}{2^{7\delta}}$ & 1.00000000 & 0.333374023 & 0.0666666701 
 \\[.2in]
$\displaystyle \frac{\#\Per(z^{15}, \FF_{2^{11\delta}})}{2^{11\delta}}$ & 1.00000000 & 0.333333492 & 0.0666666667
 \\[.2in] \hline\hline
$\{q_j \colon j \in J \}$  & $\emptyset$&  $\{ 3 \}$ &$\{ 3, 5 \}$  
  \\[.2in]
$\displaystyle\prod_{j \in J}\frac 1{q_j^{\mu_j}}$ & $1$&$ 0.333333333 $&$ 0.0666666666$ \\[.2in]
\end{tabular}
\caption{$\displaystyle \frac{\#\Per(z^{15}, \FF_{2^n})}{2^n}$ with $\nu = \left(v_3(n) , v_5(n)\right)=( 0,0)$.}
\label{z^15tab1}
\end{table}

\begin{table}[!htp]
\centering
\begin{tabular}{c || c c c }
$\delta $ & 1 & 2 & 4\\ \hline\hline
\\[.001in]
$\displaystyle \frac{\#\Per(z^{15}, \FF_{2^{3\delta}})}{2^{3\delta}}$ & 1.00000000
 \quad &0.125000000 \quad & 0.0224609375 
  \\[.2in]
$\displaystyle \frac{\#\Per(z^{15}, \FF_{2^{21\delta}})}{2^{21\delta}}$ & 1.00000000
 & 0.111111111 & 0.0222222222 
 \\[.2in]
$\displaystyle \frac{\#\Per(z^{15}, \FF_{2^{33\delta}})}{2^{33\delta}}$ & 1.00000000
 & 0.111111111 & 0.0222222222
 \\[.2in] \hline\hline
$\{q_j \colon j \in J \}$  & $\emptyset$&  $\{ 3 \}$ &$\{ 3, 5 \}$  
  \\[.2in]
$\displaystyle\prod_{j \in J}\frac 1{q_j^{\mu_j + \nu_j}}$ & $1$&$ 0.111111111 $&$ 0.0222222222$ \\[.2in]
\end{tabular}
\caption{$\displaystyle \frac{\#\Per(z^{15}, \FF_{2^n})}{2^n}$ with $\nu = \left(v_3(n) , v_5(n)\right)=(1,0)$.}
\label{z^15tab2}
\end{table}

\begin{remark*}
A statement similar to Proposition~\ref{compositecase} holds when $t$ is even, though the bookkeeping is somewhat messier.  One must apply the results in Lemma~\ref{lem: q=2}, with the exponent for 2 depending on $\max\{v_2(p - 1),v_2( p+1)\}$ and $v_2(n)$.  We leave the details to the interested reader.
\end{remark*}

\section{Chebyshev polynomials}\label{sec: Cheby}

Throughout this section, we consider $T_q(z)$, the Chebyshev polynomial of prime degree $q$.
We take $p$ to be any prime different from $q$.     The proportions of interest in this case run over slightly different towers of finite fields than in the power map case.

\begin{define}\label{def:Cheby proportion}
 We define the following proportions for integers $\nu \geq 0$.  Recall that $\delta$ is the multiplicative order of $p$ modulo $q$.
\[
R_{\nu}(T_q) = \lim_{\substack{
n \to \infty\\
\delta \mid 2n\\
v_q(n)=\nu }}\frac{\# \Per\left(T_q, \FF_{p^n}\right)}{p^n}.\\
\]
\end{define}

%A straightforward induction on the recursion in~\eqref{eqn:chebyshev def} shows that~$0$ is fixed by $T_d$ when $d$ is odd, and otherwise $0$ is not periodic for $T_d$ unless the characteristic of the field is~$2$.
%(But in fact we can ignore the question of whether $\omega = 0$ is periodic for $T_q$ since one point will not affect the limit $R_{\nu}$.)  
We begin with an explicit classification of the  periodic points of $T_q$ in $\overline{\FF_p}$.  For any $\omega \in\overline{\FF_p}$,  we may solve a quadratic to find a nonzero $\zeta \in  \overline{\FF_p}$ such that $\omega = \zeta + \zeta^{-1}$.

\begin{lem}\label{lem: powerequality}
Consider some nonzero $ \zeta \in \overline{\FF_p}$ and an integer $d \geq 0$.  Then 
\begin{equation*}
\zeta + \zeta^{-1} = \zeta^d + \zeta^{-d} \qquad
\text{   if and only if } \qquad
\zeta = \zeta^d \text{ or } \zeta = \zeta^{-d}.
\label{eqn: z+1/z}
\end{equation*}
\end{lem}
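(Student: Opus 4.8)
The plan is to prove the two implications separately, with the reverse (``if'') direction being an immediate substitution and the forward (``only if'') direction reducing to a single factorization.

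For the ``if'' direction I would simply substitute. If $\zeta = \zeta^d$, then taking inverses gives $\zeta^{-1} = \zeta^{-d}$, and adding the two equations yields $\zeta + \zeta^{-1} = \zeta^d + \zeta^{-d}$. The case $\zeta = \zeta^{-d}$ is symmetric: it forces $\zeta^{-1} = \zeta^d$, and the two sums again match. This direction uses nothing beyond $\zeta \neq 0$ and holds uniformly for all $d \geq 0$.

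For the ``only if'' direction, the first step is to clear denominators. Since $\zeta \in \overline{\FF_p}$ is nonzero, I may multiply the equation $\zeta + \zeta^{-1} = \zeta^d + \zeta^{-d}$ through by $\zeta^d$ without losing any information, obtaining the polynomial relation
\[
\zeta^{d+1} + \zeta^{d-1} = \zeta^{2d} + 1.
\]
The heart of the argument is then to recognize that the resulting expression factors; moving everything to one side and grouping gives
\[
\zeta^{d+1} + \zeta^{d-1} - \zeta^{2d} - 1 = \left(1 - \zeta^{d-1}\right)\left(\zeta^{d+1} - 1\right) = 0.
\]
Because $\overline{\FF_p}$ is a field, and hence an integral domain, one of the two factors must vanish. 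If $\zeta^{d-1} = 1$, multiplying by $\zeta$ gives $\zeta^d = \zeta$, i.e.\ $\zeta = \zeta^d$; if instead $\zeta^{d+1} = 1$, then $\zeta^d = \zeta^{-1}$, equivalently $\zeta = \zeta^{-d}$. Either way we land in one of the two desired alternatives.

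The only step requiring any ingenuity is spotting the factorization $\left(1 - \zeta^{d-1}\right)\left(\zeta^{d+1} - 1\right)$; the rest is bookkeeping, and there is no genuine obstacle. I would, however, double-check the boundary case $d = 0$: there $\zeta^{d-1} = \zeta^{-1}$ is still well defined since $\zeta$ is invertible, and both factors reduce to conditions equivalent to $\zeta = 1$, consistent with $\zeta + \zeta^{-1} = 2 = \zeta^0 + \zeta^{-0}$ forcing $\zeta = 1$. No special care is needed in small characteristic, since the factorization is a formal polynomial identity valid over any field.
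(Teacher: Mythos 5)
Your proof is correct and follows essentially the same route as the paper: multiply through by $\zeta^d$, factor the resulting relation as $(\zeta^{d-1}-1)(\zeta^{d+1}-1)=0$ (yours is the same factorization up to an overall sign), and use that $\overline{\FF_p}$ is an integral domain. The explicit check of the converse direction and of the boundary case $d=0$ is slightly more careful than the paper's presentation but adds nothing essentially new.
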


\begin{proof}
\begin{align*}
\zeta + \zeta^{-1} &= \zeta^d + \zeta^{-d}\\
\zeta^{2d} - \zeta^{d+1} - \zeta^{d-1} +1 &=0\\
(\zeta^{d-1} - 1) (\zeta^{d+1} - 1) &=0.
\end{align*}
Since $\zeta\neq 0$, the first factor vanishes if and only if $\zeta^d = \zeta$ and the second vanishes if and only if $\zeta^d = 1/\zeta$.
\end{proof}

\begin{lem}\label{lem: T2periodic}
Let $\omega \in \overline{\FF_p}$.  Then $\omega \in  \Per(T_q, \overline{\FF_p})$  if and only if $\omega = \zeta + \zeta^{-1}$ where $\zeta^d = 1$ for some  $d$ relatively prime to $q$.
\end{lem}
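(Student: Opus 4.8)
The plan is to translate periodicity of $\omega$ under $T_q$ into a statement about the multiplicative order of $\zeta$, using the functional equation~\eqref{eqn:chebyshev def} together with the iteration rule~\eqref{eqn: cheby iter}, and then to finish with the already-proved Lemma~\ref{lem: powerequality}.

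First I would record that, combining $T_q^n = T_{q^n}$ from~\eqref{eqn: cheby iter} with the defining relation~\eqref{eqn:chebyshev def} applied at degree $q^n$, we get
\[
T_q^n(\omega) = T_{q^n}(\zeta + \zeta^{-1}) = \zeta^{q^n} + \zeta^{-q^n}.
\]
Hence $\omega$ is periodic for $T_q$ exactly when there is some $n > 0$ with $\zeta^{q^n} + \zeta^{-q^n} = \zeta + \zeta^{-1}$. Applying Lemma~\ref{lem: powerequality} with $d = q^n$, this holds if and only if $\zeta^{q^n} = \zeta$ or $\zeta^{q^n} = \zeta^{-1}$; equivalently $\zeta^{q^n - 1} = 1$ or $\zeta^{q^n + 1} = 1$. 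Since this condition is unchanged when $\zeta$ is replaced by $\zeta^{-1}$, it does not depend on which square root was chosen for $\omega$.

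Next I would pass to the multiplicative order $m$ of $\zeta$, which is finite because $\zeta \in \overline{\FF_p}^*$. I claim the clause ``$\zeta^d = 1$ for some $d$ coprime to $q$'' is exactly ``$q \nmid m$'': if $q \nmid m$ take $d = m$, and conversely $\zeta^d = 1$ forces $m \mid d$, so $q \nmid d$ yields $q \nmid m$. The whole statement then reduces to the equivalence: there exists $n > 0$ with $m \mid q^n - 1$ or $m \mid q^n + 1$ if and only if $q \nmid m$. For the forward direction, $m \mid q^n \pm 1$ together with $q \mid q^n$ would force $q \mid m$ and hence $q \mid 1$, a contradiction, so $q \nmid m$. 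For the reverse direction, if $q \nmid m$ then $q$ is a unit modulo $m$, so it has finite multiplicative order $k$ there, and $n = k$ gives $q^n \equiv 1 \pmod m$, i.e. $m \mid q^n - 1$.

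The only genuinely delicate point is the bookkeeping in this last equivalence, and in particular keeping the two branches straight: the ``$+1$'' branch contributes nothing new to the reverse direction (the ``$-1$'' branch already suffices there), while in the forward direction both branches are ruled out in the same way. Everything else is direct substitution through the functional equation, so I expect no serious obstacle beyond this number-theoretic step.
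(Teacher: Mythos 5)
Your proof is correct and follows essentially the same route as the paper: apply the functional equation and iteration rule to reduce periodicity to $\zeta^{q^n\pm 1}=1$ via Lemma~\ref{lem: powerequality}, then handle the converse by finding an exponent $n$ with $q^n\equiv 1$ modulo the order of $\zeta$ (the paper uses Euler's theorem with $\varphi(d)$ where you use the multiplicative order of $q$, an immaterial difference). Your explicit reduction of the coprimality clause to $q\nmid\ord(\zeta)$ is a slightly more careful bookkeeping of what the paper does implicitly.
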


\begin{proof} 
 Suppose  $\omega \in \overline{\FF_p}$ is periodic for $T_q$, and choose $\zeta$ so that $\omega = \zeta + \zeta^{-1}$.  Then
\begin{align*}
T_q^n (\omega) &= \omega; \quad \text{that is, }\\
T_{q^n}(\zeta + \zeta^{-1}) &= \zeta^{q^n} + \zeta^{-q^n} = \zeta + \zeta^{-1}.
\end{align*}
So by Lemma~\ref{lem: powerequality}, $\zeta^{q^n-1} = 1$ or $\zeta^{q^n+1} = 1$.

Conversely, suppose there is $d$ prime to $q$ such that $\zeta^d = 1$, and let $\varphi$ be the Euler totient function.  Since  $d \mid \left(q^{\varphi(d)}-1\right)$,
  \[
  \zeta^{q^{\varphi(d)}-1} = 1; \quad \text{ that is, } \quad \zeta^{q^{\varphi(d)}} = \zeta.
  \]
   Hence $\omega = \zeta + \zeta^{-1}$ is fixed by $T_q^{\varphi(d)}$.
\end{proof}

We see that counting the  periodic points for $T_q(z)$ in $\FF_{p^n}$ reduces to counting $\zeta\in \FF_{p^n}$ such that $\zeta + \zeta^{-1} \in \FF_{p^n}$ and $\zeta^d = 1$ for some  $d$ prime to $q$.

\begin{lem}\label{lem: z+1/z in F_p^n}
Let $\zeta \in \overline{\FF_p}$.  Then $\zeta + \zeta^{-1} \in \FF_{p^n}$ if and only if $0 \neq \zeta \in \FF_{p^n} $ or $\zeta^{p^n+1} = 1$.
\end{lem}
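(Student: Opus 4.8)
The plan is to characterize membership in $\FF_{p^n}$ via the Frobenius automorphism and then reduce immediately to Lemma~\ref{lem: powerequality} applied with exponent $d = p^n$. Recall that $\FF_{p^n}$ is precisely the fixed field inside $\overline{\FF_p}$ of the $p^n$-power map $\sigma \colon x \mapsto x^{p^n}$; that is, for any $\omega \in \overline{\FF_p}$ we have $\omega \in \FF_{p^n}$ if and only if $\omega^{p^n} = \omega$. This is the translation that turns the field-membership question into a polynomial identity in $\zeta$.

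First I would set $\omega = \zeta + \zeta^{-1}$ (here $\zeta \neq 0$ is implicit, so that $\zeta^{-1}$ makes sense) and compute $\omega^{p^n}$. Because $\sigma$ is a field homomorphism in characteristic $p$, it is both additive and multiplicative, so $\omega^{p^n} = \zeta^{p^n} + \zeta^{-p^n}$. Hence $\omega \in \FF_{p^n}$ is equivalent to the identity $\zeta^{p^n} + \zeta^{-p^n} = \zeta + \zeta^{-1}$. Next I would feed this into Lemma~\ref{lem: powerequality} with $d = p^n$, which tells us the identity holds if and only if $\zeta = \zeta^{p^n}$ or $\zeta = \zeta^{-p^n}$.

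It then remains only to rewrite these two alternatives. The condition $\zeta^{p^n} = \zeta$ is equivalent to $\zeta^{p^n-1} = 1$, which says exactly that $0 \neq \zeta \in \FF_{p^n}$ (the nonzero elements of $\FF_{p^n}$ being precisely the roots of $x^{p^n-1}-1$). The condition $\zeta = \zeta^{-p^n}$ is equivalent to $\zeta^{p^n+1} = 1$. Assembling the chain of equivalences gives the stated dichotomy.

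I expect no genuine obstacle here; the proof is a short string of ``if and only if'' steps. The only point deserving a moment's attention is conceptual rather than technical: $\zeta$ itself need not lie in $\FF_{p^n}$ even when $\omega$ does, and the entire content of the lemma is that $\omega \in \FF_{p^n}$ forces $\zeta$ into one of the two cyclic groups of order dividing $p^n-1$ or $p^n+1$. Since Lemma~\ref{lem: powerequality} already isolates exactly these two cases, it does all the real work and the argument is essentially immediate.
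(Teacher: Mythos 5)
Your proof is correct and follows essentially the same route as the paper: both apply the Frobenius $\omega \mapsto \omega^{p^n}$ to reduce membership in $\FF_{p^n}$ to the identity $\zeta^{p^n} + \zeta^{-p^n} = \zeta + \zeta^{-1}$, and then invoke Lemma~\ref{lem: powerequality} with $d = p^n$ to split into the cases $\zeta^{p^n} = \zeta$ and $\zeta^{p^n} = \zeta^{-1}$. No issues.
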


\begin{proof}
We have $\zeta + \zeta^{-1}  \in \FF_{p^n}$ if and only if it satisfies
\begin{align*}
\left(\zeta + \zeta^{-1}  \right)^{p^n} &=  \zeta + \zeta^{-1}\\
\zeta^{p^n} + \zeta^{-p^n}  &=  \zeta + \zeta^{-1}.
\end{align*}
So by Lemma~\ref{lem: powerequality} either $\zeta = \zeta^{p^n}$ (i.e. $\zeta \in \FF_{p^n} $) or $1/\zeta = \zeta^{p^n}$.
\end{proof}

Once again, the classification of periodic points explains our choice of limit in Definition~\ref{def:Cheby proportion}.

 \begin{prop}\label{prop:permpoly2}
 If $\ord_q(p) = \delta \nmid 2n$, then all points of $\FF_{p^n}$ are periodic under $T_q$.
 \end{prop}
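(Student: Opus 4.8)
The plan is to reduce everything to the two classification lemmas already in hand, Lemmas~\ref{lem: T2periodic} and~\ref{lem: z+1/z in F_p^n}, and then to translate the arithmetic hypothesis $\delta \nmid 2n$ into a statement about the multiplicative order of a lift $\zeta$. First I would fix an arbitrary $\omega \in \FF_{p^n}$ and write $\omega = \zeta + \zeta^{-1}$ for some nonzero $\zeta \in \overline{\FF_p}$, as in the setup preceding Lemma~\ref{lem: powerequality}. Since $\omega \in \FF_{p^n}$, Lemma~\ref{lem: z+1/z in F_p^n} forces either $0 \neq \zeta \in \FF_{p^n}$, in which case $\zeta^{p^n - 1} = 1$, or else $\zeta^{p^n + 1} = 1$. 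In both situations the order of $\zeta$ divides $p^n - 1$ or $p^n + 1$.

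The heart of the argument is the observation that $\delta \nmid 2n$ implies $q$ divides neither $p^n - 1$ nor $p^n + 1$. Indeed, $q \mid (p^n - 1)$ is equivalent to $\delta \mid n$, which certainly gives $\delta \mid 2n$; and $q \mid (p^n + 1)$ gives $p^{2n} \equiv 1 \pmod q$, hence $\delta \mid 2n$ as well. Taking the contrapositive, if $\delta \nmid 2n$ then $q \nmid (p^n - 1)$ and $q \nmid (p^n + 1)$, so in either case above the order $d = \ord(\zeta)$ is relatively prime to $q$.

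Finally I would invoke Lemma~\ref{lem: T2periodic}: since $\zeta^d = 1$ with $d$ prime to $q$, the point $\omega = \zeta + \zeta^{-1}$ is periodic for $T_q$. As $\omega \in \FF_{p^n}$ was arbitrary, every point of $\FF_{p^n}$ is periodic, which is the claim. I expect the only genuine subtlety to be the equivalence in the second paragraph, specifically the verification that $q \mid (p^n + 1)$ cannot occur unless $\delta \mid 2n$; this rests on the fact that the only square roots of unity modulo the odd prime $q$ are $\pm 1$, together with $p^n \not\equiv 1 \pmod q$ in that case. (For $q = 2$ the statement is vacuous, since then $\delta = 1$ and $\delta \mid 2n$ always.) Everything else is a direct chain through the two classification lemmas, exactly parallel to how Proposition~\ref{prop:permpoly} followed from Lemma~\ref{lem: periodic} in the power-map case.
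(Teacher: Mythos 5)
Your proof is correct and follows essentially the same route as the paper: both arguments observe that $\delta \nmid 2n$ forces $q \nmid (p^n-1)$ and $q \nmid (p^n+1)$, then combine Lemma~\ref{lem: z+1/z in F_p^n} with Lemma~\ref{lem: T2periodic} to conclude every $\omega = \zeta + \zeta^{-1} \in \FF_{p^n}$ is periodic. The only cosmetic difference is that the paper deduces both non-divisibilities at once from $q \nmid p^{2n}-1 = (p^n-1)(p^n+1)$, whereas you treat the two factors separately.
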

 
 \begin{proof}
Given that
 \[
q \nmid p^{2n}-1, \quad \text{ we conclude that } \quad q \nmid p^n + 1 \text{ and } q \nmid p^n - 1.
\]   By Lemma~\ref{lem: z+1/z in F_p^n}, every $\omega \in \FF_{p^n}$ can be written as $\zeta + \zeta^{-1}$ for some $\zeta$ with either $\zeta^{p^n-1} = 1$ or $\zeta^{p^n+1} = 1$.  Since $p^n-1$ and $p^n+1$ are both prime to $q$, the result follows from Lemma~\ref{lem: T2periodic}.  
  \end{proof}

We now prove our main results for Chebyshev polynomials.  As in the case of pure power maps, the statements are slightly different in the case $q=2$ versus $q$ odd.  

\begin{thm}\label{lem: P_0 chebyshev}

Let $ \mu = \max\{v_2(p-1), v_2(p+1)\} $. Then
\[
R_\nu(T_2) = \frac{2^{\mu+\nu-1}+1}{2^{\mu+\nu+1}}.
\]
\end{thm}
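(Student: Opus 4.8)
The plan is to compute $\#\Per(T_2, \FF_{p^n})$ exactly for each $n$ in the relevant tower and then pass to the limit. First I would observe that since $p$ is odd we have $\delta = \ord_2(p) = 1$, so the condition $\delta \mid 2n$ in Definition~\ref{def:Cheby proportion} is automatic and the limit simply ranges over $n \to \infty$ with $v_2(n) = \nu$. The counting itself rests on the two structural lemmas already proved: by Lemma~\ref{lem: T2periodic} the periodic points of $T_2$ are exactly the values $\omega = \zeta + \zeta^{-1}$ with $\zeta \in \overline{\FF_p}^*$ of odd multiplicative order, and by Lemma~\ref{lem: z+1/z in F_p^n} such an $\omega$ lies in $\FF_{p^n}$ precisely when $\zeta^{p^n-1}=1$ or $\zeta^{p^n+1}=1$. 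So I would first count the set $S$ of odd-order $\zeta$ satisfying one of these two conditions, and then translate that into a count of $\omega$'s through the generically two-to-one map $\zeta \mapsto \zeta + \zeta^{-1}$.

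For the count of $S$, write $m$ and $m'$ for the odd parts of $p^n-1$ and $p^n+1$. The $(p^n-1)$-th (respectively $(p^n+1)$-th) roots of unity form a cyclic group whose odd-order elements number exactly $m$ (respectively $m'$). An element lying in both satisfies $\zeta^{\gcd(p^n-1,\,p^n+1)} = \zeta^{2} = 1$ and has odd order, hence is $\zeta = 1$ alone; thus $|S| = m + m' - 1$. Now $S$ is closed under $\zeta \mapsto \zeta^{-1}$, and the only fixed point of inversion inside $S$ is $\zeta = 1$, since the other fixed point $\zeta = -1$ has even order and is excluded. Therefore $\zeta = 1 \mapsto 2$ and the remaining $m + m' - 2$ elements pair off two-to-one, giving
\[
\#\Per(T_2, \FF_{p^n}) = 1 + \frac{m + m' - 2}{2} = \frac{m + m'}{2}.
\]

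It then remains to determine $m$ and $m'$ asymptotically. Applying Lemma~\ref{lem: q=2} to $p^{2n}-1$, which is legitimate because $v_2(2n) = \nu + 1 \geq 1$ for every $\nu \geq 0$, yields $v_2(p^{2n}-1) = \mu + \nu + 1$. Since $p^n-1$ and $p^n+1$ are consecutive even integers, exactly one of them is congruent to $2 \pmod 4$, so their valuations are $1$ and $\mu+\nu$ in some order. Consequently one of $m, m'$ equals $(p^n \mp 1)/2^{\mu+\nu}$ and the other equals $(p^n \pm 1)/2$, and
\[
\frac{\#\Per(T_2, \FF_{p^n})}{p^n} \longrightarrow \frac{1}{2}\left(\frac{1}{2^{\mu+\nu}} + \frac{1}{2}\right) = \frac{2^{\mu+\nu-1}+1}{2^{\mu+\nu+1}},
\]
which is the claimed value of $R_\nu(T_2)$.

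The step that requires the most care is the passage from the count of $\zeta$ to the count of $\omega$: one must verify that $S$ is stable under inversion and carefully identify which fixed points of inversion survive. The essential point, and precisely the feature that distinguishes the $q=2$ case from the odd-$q$ case, is that $-1$ has even order and is therefore \emph{not} an admissible $\zeta$; this is what makes the overlap equal to $1$ (rather than $2$) and leaves a single fixed point $\omega = 2$ (rather than two, $\omega = \pm 2$). Everything else is the routine arithmetic of odd parts and the limit computation.
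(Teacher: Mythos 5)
Your proof is correct and follows essentially the same route as the paper's: both reduce to counting $\zeta$ of odd order with $\zeta^{p^n-1}=1$ or $\zeta^{p^n+1}=1$ via Lemmas~\ref{lem: T2periodic} and~\ref{lem: z+1/z in F_p^n}, arrive at the count $(d_1+d_2)/2$ where $d_1,d_2$ are the odd parts of $p^n\mp 1$, and pass to the limit using the fact that the $2$-adic valuations of $p^n-1$ and $p^n+1$ are $1$ and $\mu+\nu$ in some order. Your only departures are cosmetic improvements: you handle $\nu=0$ and $\nu\ge 1$ uniformly by applying Lemma~\ref{lem: q=2} to $p^{2n}-1$ (where the paper splits into two cases), and you make the inclusion--exclusion and inversion-pairing behind the count $(d_1+d_2)/2$ fully explicit.
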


\begin{proof}
Assume $\omega \in \FF_{p^n}$ is periodic for $T_2$.  Then by Lemma~\ref{lem: T2periodic},   $\omega = \zeta + \zeta^{-1} $, where $\zeta^d = 1$ for some odd $d$.  Since $\zeta + \zeta^{-1} \in \FF_{p^n}$, we apply  Lemma~\ref{lem: z+1/z in F_p^n} to conclude that $\zeta^{p^n+1} = 1$ or $\zeta^{p^n-1} = 1$.

First suppose  $v_2(n)=0$, so by Lemma~\ref{lem: q notdiv n}  $v_2(p-1) = v_2(p^n-1)$.  Then
\begin{equation}
\begin{IEEEeqnarraybox}[][c]{l?s}
\IEEEstrut
p^n-1 = 2^\mu d_1 \\
p^n + 1 =2d_2; 
\IEEEstrut
\end{IEEEeqnarraybox}
\qquad \text{or}\qquad
\begin{IEEEeqnarraybox}[][c]{l?s}
\IEEEstrut
p^n-1 = 2 d_2 \\
p^n + 1 =2^\mu d_1,
\IEEEstrut
\end{IEEEeqnarraybox}
\label{eqn:q=2}
\end{equation}
where $d_1$ and $d_2$ are odd.  
 Note that $d_1$ and $d_2$ are relatively prime since 
$d_1 \mid (p^n+1)$ and $d_2 \mid (p^n-1)$ or vice-versa,
with both odd.

Similarly, if $v_2(n)=\nu \geq 1$,  Lemma~\ref{lem: q=2} shows that $v_2(p^n-1) = \mu + \nu$, so we have
\begin{equation*}
p^n-1 = 2^{\mu+\nu} d_1 \quad \text{ and } \quad 
p^n + 1 =2d_2; 
\end{equation*}
where $d_1$ and $d_2$ are odd and relatively prime.

In either case, $\zeta + \zeta^{-1}$ is periodic if and only if $\zeta^{d_1} = 1$ or $\zeta^{d_2}=1$.  Each such pair  $(\zeta, \zeta^{-1})$ --- including the pair $(1,1)$ ---  corresponds to a periodic point for $T_2$.  Therefore, we have  
$(d_1 + d_2)/2$ periodic points for $T_2$ in $\FF_{p^n}$.

Asymptotically,  $p^n+1 \sim p^n-1$.  That is, 
\[ 
2^{\mu+\nu}  d_1 \sim 2 d_2, \quad \text{ so } \quad 2^{\mu+\nu-1} d_1 \sim d_2.
\]
Hence
\[
R_\nu(T_2) = 
\lim_{
\substack{n \to \infty \\ v_2(n) = \nu}
}
\frac{\# \Per\left(T_2, \FF_{p^n}\right)}{p^n} =
\lim_{
\substack{d \to \infty \\ d\text{ odd}}
}
\frac{(d + 2^{\mu+\nu-1}d)/2}{2^\mu d+1} =
\frac{2^{\mu+\nu-1}+1}{2^{\mu+\nu+1}}. \qedhere
\]
\end{proof}

In Tables~\ref{T2tab1}--\ref{T2tab2}, we illustrate Theorem~\ref{lem: P_0 chebyshev} using data from Sage~\cite{sage}.

\begin{table}[!htp]
\centering
\begin{tabular}{c || c c c}
$p$ & 3 & 7 & 17 \\
$\mu = \max\{ v_2(p-1), v_2(p+1)\}$ & 2 & 3 & 4\\ \hline \hline
\\[.001in]
$\displaystyle \frac{\#\Per(T_2, \FF_{p})}{p}$ & 0.333333333 \quad &0.285714286 \quad & 0.294117647 
  \\[.2in]
$\displaystyle \frac{\#\Per(T_2, \FF_{p^3})}{p^3}$ & 0.370370370 & 0.311953353 & 0.281294525 
 \\[.2in]
$\displaystyle \frac{\#\Per(T_2, \FF_{p^{5}})}{p^{5}}$ & 0.374485597 & 0.312488844 & 0.281250154
  \\[.2in]
$\displaystyle \frac{\#\Per(T_2, \FF_{p^{7}})}{p^{7}}$ & 0.374942844 & 0.312499772 & 0.281250001
 \\[.2in] \hline\hline
$\displaystyle \frac{2^{\mu-1}+1}{2^{\mu+1}}$ & $ 0.375 $&$ 0.3125$ &$ 0.28125$  \\[.2in]
\end{tabular}
\caption{$\displaystyle \frac{\#\Per(T_2, \FF_{p^n})}{p^n}$ with $n$ odd.}
\label{T2tab1}
\end{table}

\begin{table}[!htp]
\centering
\begin{tabular}{c || c c c}
$p$ & 3 & 7 & 17 \\
$\mu = \max\{ v_2(p-1), v_2(p+1)\}$ & 2 & 3 & 4\\ \hline \hline
\\[.001in]
$\displaystyle \frac{\#\Per(T_2, \FF_{p^2})}{p^2}$ & 0.333333333 \quad &0.285714286 \quad & 0.266435986 
  \\[.2in]
$\displaystyle \frac{\#\Per(T_2, \FF_{p^6})}{p^6}$ & 0.312757202 & 0.281251859 & 0.265625010 
 \\[.2in]
$\displaystyle \frac{\#\Per(T_2, \FF_{p^{10}})}{p^{10}}$ & 0.312503175 & 0.281250001 & 0.265625000
  \\[.2in]
$\displaystyle \frac{\#\Per(T_2, \FF_{p^{14}})}{p^{14}}$ & 0.312500039 & 0.281250000 & 0.265625000
 \\[.2in] \hline\hline
$\displaystyle \frac{2^{\mu}+1}{2^{\mu+2}}$& $ 0.3125 $&$ 0.28125$ &$ 0.265625$  \\[.2in]
\end{tabular}
\caption{$\displaystyle \frac{\#\Per(T_2, \FF_{p^n})}{p^n}$ with $v_2(n) = 1$.}
\label{T2tab2}
\end{table}

\begin{thm}\label{lem: P_v chebyshev}
Let $q$ be an odd prime.
Let $ v_q(p^{\delta}-1) = \mu \geq 1$. Then
\[
R_\nu(T_q) = \frac{q^{\mu+\nu}+1}{2 q^{\mu+\nu}}.
\]
\end{thm}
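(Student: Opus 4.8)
The plan is to follow the template of the proof of Theorem~\ref{lem: P_0 chebyshev}, first reducing the count of periodic points to a count of suitable roots of unity and then carrying out the relevant $q$-adic valuation computation using Lemma~\ref{lem: q odd}. First I would combine Lemma~\ref{lem: T2periodic} and Lemma~\ref{lem: z+1/z in F_p^n}: a point $\omega \in \FF_{p^n}$ is periodic for $T_q$ exactly when $\omega = \zeta + \zeta^{-1}$ for some nonzero $\zeta \in \overline{\FF_p}$ whose multiplicative order is prime to $q$ and which satisfies $\zeta^{p^n-1}=1$ or $\zeta^{p^n+1}=1$. Writing $p^n-1 = q^{e_1}d_1$ and $p^n+1 = q^{e_2}d_2$ with $q\nmid d_1d_2$, the ``order prime to $q$'' condition collapses these two possibilities into $\zeta^{d_1}=1$ or $\zeta^{d_2}=1$. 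Thus the periodic $\zeta$ form the union $U$ of the two cyclic groups of orders $d_1$ and $d_2$.

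Second, I would count the periodic $\omega$ by analyzing the two-to-one map $\zeta \mapsto \zeta + \zeta^{-1}$ on $U$. Since $p$ and $q$ are both odd, each of $d_1,d_2$ is even and $\gcd(d_1,d_2)=2$, so the two groups meet exactly in $\{\pm 1\}$ and $|U| = d_1 + d_2 - 2$. The involution $\zeta \mapsto \zeta^{-1}$ preserves $U$ and fixes precisely $\pm 1$, so the number of orbits—equivalently the number of periodic points, since distinct orbits give distinct values $\zeta+\zeta^{-1}$—is $2 + (|U|-2)/2 = (d_1+d_2)/2$, exactly as in the case $q=2$.

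Third comes the valuation computation, which I expect to be the step requiring the most care. The hypothesis $\delta \mid 2n$ splits into two cases. If $\delta \mid n$, then $q \mid p^n - 1$ while $q \nmid p^n+1$ (otherwise $q \mid 2$), and Lemma~\ref{lem: q odd} applied to $p^n - 1 = (p^\delta)^{n'}-1$ gives $e_1 = v_q(p^n-1) = \mu+\nu$ and $e_2 = 0$. If instead $\delta \nmid n$ but $\delta \mid 2n$, then $q \nmid p^n-1$; writing $2n = \delta k$ (note $k$ is odd, which is equivalent to $\delta \nmid n$) and applying Lemma~\ref{lem: q odd} to $p^{2n}-1 = (p^n-1)(p^n+1) = (p^\delta)^k - 1$ gives $v_q(p^n+1) = v_q(p^{2n}-1) = \mu + v_q(k) = \mu+\nu$, so here $e_2 = \mu+\nu$ and $e_1 = 0$. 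The subtle identity is $v_q(k) = \nu$, which holds because $q\nmid \delta$ (as $\delta < q$) and $q$ is odd, whence $v_q(k) = v_q(\delta k) = v_q(2n) = \nu$.

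Finally, in either case one of $d_1, d_2$ equals $(p^n \mp 1)/q^{\mu+\nu}$ while the other equals the full $p^n \pm 1$. Substituting into $(d_1+d_2)/2$, dividing by $p^n$, and letting $n \to \infty$ along the tower gives, in both cases,
\[
R_\nu(T_q) = \frac{1}{2}\left(\frac{1}{q^{\mu+\nu}} + 1\right) = \frac{q^{\mu+\nu}+1}{2q^{\mu+\nu}},
\]
so the limit is well-defined even though the tower may mix the two cases, and it equals the claimed value. The main obstacle is the bookkeeping in the second case, and in particular the clean identification $v_q(p^n+1) = \mu+\nu$ obtained by passing through $p^{2n}-1$ rather than working with $p^n+1$ directly.
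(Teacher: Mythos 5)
Your proposal is correct and follows essentially the same route as the paper's proof: classify the periodic points via Lemmas~\ref{lem: T2periodic} and~\ref{lem: z+1/z in F_p^n}, count them as $(d_1+d_2)/2$, and use Lemma~\ref{lem: q odd} (applied through $p^{2n}-1$) to see that exactly one of $p^n\pm 1$ carries the full factor $q^{\mu+\nu}$. The only quibble is that your orbit count assumes $p$ odd (so that $d_1,d_2$ are even with $\gcd(d_1,d_2)=2$), whereas the theorem permits $p=2$; the count $(d_1+d_2)/2$ still holds there, since then $d_1$ and $d_2$ are odd and coprime and the involution $\zeta\mapsto\zeta^{-1}$ has a single fixed point, a case the paper itself only addresses parenthetically.
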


\begin{proof}
Assume $\omega \in \FF_{p^n}$ is periodic for $T_q$.  Then by Lemma~\ref{lem: T2periodic},   $\omega = \zeta + \zeta^{-1} $, where $\zeta^d = 1$ for some  $d$ prime to $q$.  Since $\zeta + \zeta^{-1} \in \FF_{p^n}$, we apply  Lemma~\ref{lem: z+1/z in F_p^n} to conclude that $\zeta^{p^n+1} = 1$ or $\zeta^{p^n-1} = 1$.

Since   $ v_q(p^\delta-1) = \mu \geq 1$ and $v_q(n) = \nu$,  by Lemma~\ref{lem: q odd}  $ v_q(p^{2n}-1) = \mu + \nu \geq 1$.  So 
\[
q \mid p^{2n}-1, \quad \text{ which means that } \quad q \mid p^n-1 \text{ or } q \mid p^n+1 \text{ but not both.}
\]
Therefore
\begin{equation}
\begin{IEEEeqnarraybox}[][c]{l?s}
\IEEEstrut
p^n-1 = q^{\mu+\nu} d_1 \\
p^n + 1 = d_2; 
\IEEEstrut
\end{IEEEeqnarraybox}
\qquad \text{or}\qquad
\begin{IEEEeqnarraybox}[][c]{l?s}
\IEEEstrut
p^n-1 =  d_2 \\
p^n + 1 = q^{\mu+\nu} d_1,
\IEEEstrut
\end{IEEEeqnarraybox}
\label{eqn:qoddprime}
\end{equation}
where $q \nmid d_1 d_2$.

Now, $\zeta + \zeta^{-1}$ is periodic if and only if $\zeta^{d_1} = 1$ or $\zeta^{d_2}=1$.  Each such pair  $(\zeta, \zeta^{-1})$ --- including the pairs $(1,1)$ and $(-1,-1)$ if $p$ odd ---  corresponds to a periodic point for $T_q$.  So we have  
$(d_1 + d_2)/2$ periodic points for $T_q$ in $\FF_{p^n}$.

Again,  $p^n+1 \sim p^n-1$ meaning
\[ 
q^{\mu+\nu}  d_1 \sim  d_2.
\]
Hence
\[
R_\nu(T_q) = 
\lim_{
\substack{n \to \infty \\ \delta \mid 2n \\ v_q(n) = \nu}
}
\frac{\# \Per\left(T_q, \FF_{p^n}\right)}{p^n} =
\lim_{
\substack{d \to \infty \\ q \nmid d}
}
\frac{(d + q^{\mu+\nu}d)/2}{q^{\mu+\nu} d+1} =
\frac{q^{\mu+\nu}+1}{2q^{\mu+\nu}}. \qedhere
\]
\end{proof}

\begin{remark*}
Theorem~\ref{lem: P_0 chebyshev} says that the proportion of periodic points in the appropriate towers for $T_2$ is something slightly more than $1/4$, where the difference  depends on the tower.  Similarly, Theorem~\ref{lem: P_v chebyshev} says that for $q$ an odd prime, the proportion is slightly greater than $1/2$.   We can understand these results a bit more intuitively in the following way.

Consider roots of the polynomials $z^{p^n+1}-1$ and $z^{p^n-1}-1$ over the field $\overline{\FF_p}$.  Equation~\eqref{eqn:qoddprime} shows that for one of the two equations, all roots $\zeta$ yield a periodic point $\zeta+\zeta^{-1}$ for $T_q$.  So we are guaranteed something close to $p^n/2$ periodic points from roots of one of the polynomials, and we pick up a few more from roots of the other polynomial.  A similar explanation for $T_2$ can be derived from equation~\eqref{eqn:q=2}.
\end{remark*}

In Table~\ref{T3tab1}, we illustrate Theorem~\ref{lem: P_v chebyshev} for $T_3(z)$ over various finite fields.  Note that for the choices of primes in the table, $\delta \mid 2n$ for all integers $n$.

\begin{table}[!htp]
\centering
\begin{tabular}{c || c c c}
$p$ &  5 & 19 & 53 \\
$\delta = \ord_3(p)$ & 2 & 1 & 2\\
$\mu = v_3(p^\delta-1)$ & 1 & 2 & 3 \\ \hline \hline
\\[.001in]
$\displaystyle \frac{\#\Per(T_3, \FF_{p})}{p}$ & 0.600000000 \quad &0.578947368 \quad & 0.509433962 
  \\[.2in]
$\displaystyle \frac{\#\Per(T_3, \FF_{p^2})}{p^2}$ & 0.680000000 & 0.556786704 & 0.518689925 
 \\[.2in]
$\displaystyle \frac{\#\Per(T_3, \FF_{p^{4}})}{p^{4}}$ & 0.667200000 & 0.555558966 & 0.518518579
 \\[.2in] \hline\hline
$\displaystyle \frac{3^{\mu}+1}{2\cdot 3^{\mu}}$ & $ 0.666666667 $&$ 0.555555556$ &$ 0.518518519$  \\[.2in]
\end{tabular}
\caption{$\displaystyle \frac{\#\Per(T_3, \FF_{p^n})}{p^n}$ with $v_3(n) = 0$.}
\label{T3tab1}
\end{table}

Once again,
we wish to extend our results to polynomials with composite degree.  Lemma~\ref{lem : primepowers} takes care of prime power degree, so we are left to consider the case of the $t^{\text{th}}$ Chebyshev polynomial, $T_t(z)$, for  $t = q_1^{f_1}q_2^{f_2}\cdots q_r^{f_r}$ and $r \geq 2$.  We continue with the notation introduced at the end of Section~\ref{sec: power}:
for each $1 \leq i \leq r$, let 
\[
\delta_i =\ord_{q_i}(p)\quad  \text{ and}  \quad
\mu_i =v_{q_i}(p^{\delta_i}-1). 
\]
We also define 
\[
\Delta = \lcm\{ \delta_i\}_{1\leq i \leq r}.
\]
 The argument in Proposition~\ref{prop:permpoly2} can be modified to  show that if $\gcd(\Delta, 2n) = 1$, then all points of $\FF_{p^n}$ will be periodic.  But as in Section~\ref{sec: power}, we need not require $\Delta \mid 2n$  to have a nontrivial ratio of periodic points.  
 
 As before,  for each $n \in \ZZ$ we define an $r$-tuple of valuations
\[
v(n) = \langle v_{q_i}(n)\rangle_{1 \leq i \leq r}.
\]
We then define the ratios of interest:
\[
R_{\delta,\nu}(T_t) =
 \lim_{\substack{
n \to \infty\\
\gcd(\Delta, n) = \delta\\
v(n)=\langle \nu_i\rangle}}
\frac{\# \Per\left(T_t, \FF_{p^n}\right)}{p^n}.
\]

\begin{thm}\label{lem: R_v chebyshevcomposite}
 Let $t=q_1^{f_1}q_2^{f_2}\ldots q_r^{f_r}$,   with $q_i$ distinct odd primes for $1\leq i\leq r.$   Then there are disjoint subsets $I, J \subseteq \{1, 2, \ldots, r \}$  such that 
 \[
R_{\delta,\nu}(T_t) = \frac{Q_{I} + Q_{J}}{2Q_IQ_J},
\]
where
\[
Q_I = \prod_{i\in I}{q_i^{\mu_i + \nu_i}}
\quad  \text{ and } \quad
Q_{J} = \prod_{j\in J}{q_j^{\mu_j+ \nu_j}}. 
\]
  \end{thm}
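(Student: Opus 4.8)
The plan is to mirror the prime-degree argument of Theorem~\ref{lem: P_v chebyshev}, replacing the single prime $q$ by the factorization $t = q_1^{f_1}\cdots q_r^{f_r}$. First I would extend the classification of periodic points to composite degree. Using the iteration formula $T_t^n = T_{t^n}$ from~\eqref{eqn: cheby iter} together with Lemma~\ref{lem: powerequality}, a point $\omega = \zeta + \zeta^{-1} \in \overline{\FF_p}$ is periodic for $T_t$ if and only if $\zeta^{t^n - 1} = 1$ or $\zeta^{t^n+1} = 1$ for some $n$. Since every prime $q_i \mid t$ is coprime to $t^n \pm 1$, this holds exactly when the order of $\zeta$ is prime to $t$, giving the precise analogue of Lemma~\ref{lem: T2periodic}. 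Combined with Lemma~\ref{lem: z+1/z in F_p^n}, the periodic points of $T_t$ in $\FF_{p^n}$ then correspond to those $\zeta$ of order prime to $t$ satisfying $\zeta^{p^n-1}=1$ or $\zeta^{p^n+1}=1$.

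Next I would identify the sets $I$ and $J$ and pin down the relevant valuations. Set $J = \{i : q_i \mid p^n - 1\}$ and $I = \{i : q_i \mid p^n + 1\}$; these are disjoint, since an odd prime cannot divide both $p^n - 1$ and $p^n + 1$. Along the limiting sequence $q_i \mid p^n - 1 \iff \delta_i \mid n \iff \delta_i \mid \delta$, so $J$ is exactly the maximal set with $\delta_J \mid \delta$ from Section~\ref{sec: power}, and is constant. For $I$ one has $q_i \mid p^n + 1 \iff \delta_i \mid 2n$ and $\delta_i \nmid n$; here the one genuinely new feature appears, namely that $\gcd(\Delta,2n)$ must be shown to be determined by $\delta$. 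A short $v_2$ computation does this: the constraint $\gcd(\Delta,n)=\delta$ forces $\gcd(\Delta, 2n) = \delta$ when $v_2(\delta) = v_2(\Delta)$ and $\gcd(\Delta,2n)=2\delta$ when $v_2(\delta) < v_2(\Delta)$, so $I$ is constant as well. Applying Lemma~\ref{lem: q odd} to $p^n - 1$ gives $v_{q_j}(p^n - 1) = \mu_j + \nu_j$ for $j \in J$, and applying it to $p^{2n}-1 = (p^n-1)(p^n+1)$, whose first factor is prime to $q_i$ when $i\in I$, gives $v_{q_i}(p^n+1) = \mu_i + \nu_i$ for $i \in I$. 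Thus $Q_J$ and $Q_I$ are precisely the $t$-parts of $p^n - 1$ and $p^n + 1$, and I may write $p^n - 1 = Q_J d_1$ and $p^n + 1 = Q_I d_2$ with $\gcd(t, d_1 d_2)=1$.

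Then I would count exactly as in the prime case. The admissible $\zeta$ with $\zeta^{p^n-1}=1$ are the $d_1$ roots of $\zeta^{d_1}=1$, those with $\zeta^{p^n+1}=1$ are the $d_2$ roots of $\zeta^{d_2}=1$, and the two sets meet only in $\{\zeta : \zeta^2 = 1\}$. Passing to the unordered pairs $\{\zeta,\zeta^{-1}\}$ that index distinct periodic points (with the self-inverse elements $\pm 1$ handled exactly as in Theorem~\ref{lem: P_v chebyshev}) yields $(d_1+d_2)/2$ periodic points in $\FF_{p^n}$. Dividing by $p^n$ and letting $n\to\infty$ along the tower gives
\[
\frac{(d_1+d_2)/2}{p^n} = \frac12\left(\frac{p^n-1}{Q_J\, p^n} + \frac{p^n+1}{Q_I\, p^n}\right) \longrightarrow \frac12\left(\frac{1}{Q_J} + \frac{1}{Q_I}\right) = \frac{Q_I + Q_J}{2 Q_I Q_J}.
\]

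The main obstacle I anticipate is the bookkeeping in the second paragraph: verifying that $I$ really is constant along the limiting sequence—this is where the parity interaction between $\Delta$ and $n$ enters and is the one new ingredient beyond the prime-degree proof—and correctly extracting $v_{q_i}(p^n+1)$ from the valuation of $p^{2n}-1$ rather than directly. Once $Q_I$ and $Q_J$ are correctly identified as the two $t$-parts, both the counting and the limit are routine, and they proceed uniformly whether $p$ is odd or $p=2$.
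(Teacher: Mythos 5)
Your proposal is correct and follows essentially the same route as the paper's proof: the same identification of $I$ and $J$ (the paper defines them via divisibility of $p^\delta\mp 1$ and then transfers to $p^n\mp 1$, whereas you define them via $p^n\mp 1$ and check constancy along the tower, which amounts to the same verification), the same use of Lemma~\ref{lem: q odd} on $n$ and $2n$ to get the exponents $\mu_i+\nu_i$, and the same count of $(d_1+d_2)/2$ periodic points. Your final limit computation, splitting $\frac{(d_1+d_2)/2}{p^n}$ into $\frac{1}{2}\left(\frac{p^n-1}{Q_J p^n}+\frac{p^n+1}{Q_I p^n}\right)$, is a slightly cleaner way of taking the limit than the paper's asymptotic substitution $d_2\sim d_1 Q_J/Q_I$, but this is a cosmetic difference only.
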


\begin{proof}
Take $J$  maximal with $\delta_J \mid \delta$; then we know that $q_j \mid p^\delta - 1$ if and only if $j \in J$.  Now define 
\[
I =\left\{ 1 \leq i \leq r \colon q_i \mid p^\delta + 1 \right\}.
\]
Since the primes dividing $t$ are distinct odd primes,  no $q_i$ divides both $p^\delta - 1$ and $p^\delta+1$.  Hence $I \cap J = \emptyset$.

Now consider any $n$ with $\gcd(\Delta, n) = \delta$.  Clearly $q_j \mid p^n - 1$ if and only if $j \in J$.  For any $i \in I$, we have 
\[
q_i \mid p^\delta + 1 \quad \Longrightarrow \quad q_i \mid p^{2\delta}-1 \quad \Longrightarrow \quad q_i \mid p^{2n} - 1.
\]
Since $i \not\in J$,  $q_i \nmid p^n - 1$.  Therefore $q_i \mid p^n + 1$.  Furthermore, since $\gcd(\Delta, 2n) \mid 2\delta$, we have 
\[
q_i \mid p^{2n}-1 \quad \Longleftrightarrow \quad q_i \mid p^{2\delta}-1 \quad \Longleftrightarrow \quad i \in I \cup J.
\]
That is,
$q_i \mid p^n + 1 $ if and only if $i \in I$.

Therefore 
\begin{align*}
p^n - 1 &= d_1 \prod_{j\in J} q_j^{e_j}  & p^n + 1 &= d_2 \prod_{i \in I} q_i^{e_i},
\end{align*}
with $\gcd(t, d_1) = \gcd(t, d_2) = 1$.  Lemma~\ref{lem: q odd}, applied to $n$ and $2n$ respectively, shows that $e_j = \mu_j+\nu_j$ for $j \in J$ and 
$e_i = \mu_i+\nu_i$ for $i \in I$.

Lemma~\ref{lem: T2periodic} extends easily to the case of composite degree, and we conclude that $\omega \in \FF_{p^n}$ is periodic for $T_t$ if and only if $\omega = \zeta + \zeta^{-1}$ with $\zeta^{d_1} = 1$ or $\zeta^{d_2}=1$.  As before, we have $(d_1+d_2)/2$ periodic points for $T_t$ in $\FF_{p^n}$.

Since $p^n + 1 \sim p^n - 1$, we have
\[
d_1 \prod_{j\in J} q_j^{\mu_j + \nu_j} \sim d_2 \prod_{i \in I} q_i^{\mu_i+ \nu_i},
\quad \text{ meaning } \quad
d_2 \sim d_1 \frac{Q_J}{Q_I}.
\]
We can now calculate the limit:
\begin{align*}
R_{\delta,\nu}(T_t) 
&=
 \lim_{\substack{
n \to \infty\\
\gcd(\Delta, n) = \delta\\
v(n)=\langle \nu_i\rangle}}
\frac{\# \Per\left(T_t, \FF_{p^n}\right)}{p^n}\\
&=
\lim_{
\substack{d \to \infty \\ \gcd(t, d) = 1}
}
\frac{\left(d + d\frac{Q_J}{Q_I}\right)/2}{Q_Jd+1} =
\frac{Q_I + Q_J}{2Q_IQ_J}. \qedhere
\end{align*}

\end{proof}

In Table~\ref{T15tab1}  we use data from Sage~\cite{sage} to illustrate Theorem~\ref{lem: R_v chebyshevcomposite}  for the $15^{\text{th}}$ Chebyshev polynomial over fields $\FF_{2^n}$.  In the notation of the theorem, we have:
\begin{align*}
 q_1 &= 3 & q_2 & = 5 & p &=2 \\
 \delta_1 & = 2 & \delta_2 & = 4 & \Delta & = 4\\
 \mu_1 & = v_3(2^2-1) = 1 & \mu_2&= v_5(2^4-1) = 1.\\
\end{align*}
Note that in the table, we restrict to values of $n$ with $\gcd(4,n) = \delta$.

\begin{table}[!htp]
\centering
\begin{tabular}{c || c c c }
$\delta $ & 1 & 2 & 4\\ 
$2^\delta - 1$ & 1 & 3 & $3\cdot5$ \\
$2^\delta + 1$ & 3 & 5& 17 \\ \hline\hline
\\[.001in]
$\displaystyle \frac{\#\Per(T_{15}, \FF_{2^\delta})}{2^\delta}$ & 0.500000000 \quad &0.266662598 \quad & 0.562500000 
  \\[.2in]
$\displaystyle \frac{\#\Per(T_{15}, \FF_{2^{7\delta}})}{2^{7\delta}}$ & 0.656250000 & 0.266666651 & 
0.506667137 
 \\[.2in]
$\displaystyle \frac{\#\Per(T_{15}, \FF_{2^{11\delta}})}{2^{11\delta}}$ & 0.664062500 & 0.266666667 & 0.533333335
 \\[.2in] \hline\hline
 $\{q_i \colon i \in I \}$ & $\{ 3 \}$ & $\{ 5 \}$ &$ \emptyset$
  \\[.1in]
$\{q_j \colon j \in J \}$  & $\emptyset$&  $\{ 3 \}$ &$\{ 3, 5 \}$ 
 \\[.1in]
$\displaystyle \frac{Q_I + Q_J}{2Q_IQ_J}$ & $0.666666667$&$ 0.266666667 $&$ 0.533333333$ \\[.2in]
\end{tabular}
\caption{$\displaystyle \frac{\#\Per(T_{15}, \FF_{2^n})}{2^n}$ with $\nu = \left(v_3(n) , v_5(n)\right)=( 0,0)$.}
\label{T15tab1}
\end{table}

\newpage

%%%% Need this for references. Also need a ref bib tech file. Then you need to compile, bibtech compile, then compile twice to get latex to recognize it has some references. 
\bibliographystyle{plain}

\end{document}